\newfont{\teneufm}{eufm10}
\newfont{\seveneufm}{eufm7}
\newfont{\fiveeufm}{eufm5}
\def\bbbc{{\mathchoice {\setbox0=\hbox{$\displaystyle\rm C$}\hbox{\hbox
to0pt{\kern0.4\wd0\vrule height0.9\ht0\hss}\box0}}
{\setbox0=\hbox{$\textstyle\rm C$}\hbox{\hbox
to0pt{\kern0.4\wd0\vrule height0.9\ht0\hss}\box0}}
{\setbox0=\hbox{$\scriptstyle\rm C$}\hbox{\hbox
to0pt{\kern0.4\wd0\vrule height0.9\ht0\hss}\box0}}
{\setbox0=\hbox{$\scriptscriptstyle\rm C$}\hbox{\hbox
to0pt{\kern0.4\wd0\vrule height0.9\ht0\hss}\box0}}}}
\def\bbbq{{\mathchoice {\setbox0=\hbox{$\displaystyle\rm
Q$}\hbox{\raise
0.15\ht0\hbox to0pt{\kern0.4\wd0\vrule height0.8\ht0\hss}\box0}}
{\setbox0=\hbox{$\textstyle\rm Q$}\hbox{\raise
0.15\ht0\hbox to0pt{\kern0.4\wd0\vrule height0.8\ht0\hss}\box0}}
{\setbox0=\hbox{$\scriptstyle\rm Q$}\hbox{\raise
0.15\ht0\hbox to0pt{\kern0.4\wd0\vrule height0.7\ht0\hss}\box0}}
{\setbox0=\hbox{$\scriptscriptstyle\rm Q$}\hbox{\raise
0.15\ht0\hbox to0pt{\kern0.4\wd0\vrule height0.7\ht0\hss}\box0}}}}
\def\bbbt{{\mathchoice {\setbox0=\hbox{$\displaystyle\rm
T$}\hbox{\hbox to0pt{\kern0.3\wd0\vrule height0.9\ht0\hss}\box0}}
{\setbox0=\hbox{$\textstyle\rm T$}\hbox{\hbox
to0pt{\kern0.3\wd0\vrule height0.9\ht0\hss}\box0}}
{\setbox0=\hbox{$\scriptstyle\rm T$}\hbox{\hbox
to0pt{\kern0.3\wd0\vrule height0.9\ht0\hss}\box0}}
{\setbox0=\hbox{$\scriptscriptstyle\rm T$}\hbox{\hbox
to0pt{\kern0.3\wd0\vrule height0.9\ht0\hss}\box0}}}}
\def\bbbs{{\mathchoice
{\setbox0=\hbox{$\displaystyle     \rm S$}\hbox{\raise0.5\ht0\hbox
to0pt{\kern0.35\wd0\vrule height0.45\ht0\hss}\hbox
to0pt{\kern0.55\wd0\vrule height0.5\ht0\hss}\box0}}
{\setbox0=\hbox{$\textstyle        \rm S$}\hbox{\raise0.5\ht0\hbox
to0pt{\kern0.35\wd0\vrule height0.45\ht0\hss}\hbox
to0pt{\kern0.55\wd0\vrule height0.5\ht0\hss}\box0}}
{\setbox0=\hbox{$\scriptstyle      \rm S$}\hbox{\raise0.5\ht0\hbox
to0pt{\kern0.35\wd0\vrule height0.45\ht0\hss}\raise0.05\ht0\hbox
to0pt{\kern0.5\wd0\vrule height0.45\ht0\hss}\box0}}
{\setbox0=\hbox{$\scriptscriptstyle\rm S$}\hbox{\raise0.5\ht0\hbox
to0pt{\kern0.4\wd0\vrule height0.45\ht0\hss}\raise0.05\ht0\hbox
to0pt{\kern0.55\wd0\vrule height0.45\ht0\hss}\box0}}}}
\def\bbbz{{\mathchoice {\hbox{$\sf\textstyle Z\kern-0.4em Z$}}
{\hbox{$\sf\textstyle Z\kern-0.4em Z$}}
{\hbox{$\sf\scriptstyle Z\kern-0.3em Z$}}
{\hbox{$\sf\scriptscriptstyle Z\kern-0.2em Z$}}}}
\newtheorem{theorem}{Theorem}
\newtheorem{lemma}[theorem]{Lemma}
\def\cH{{\mathcal H}}
\def\cM{{\mathcal M}}
\def\cO{{\mathcal O}}
\def\cS{{\mathcal S}}
\def\cT{{\mathcal T}}
\def\({\left(}
\def\){\right)}
\def\[{\left[}
\def\]{\right]}
\def\<{\langle}
\def\>{\rangle}
\def\F{\mathbb{F}}
\def\N{\mathbb{N}}
\def\Z{\mathbb{Z}}
\def\Q{\mathbb{Q}}
\def\mand{\qquad\mbox{and}\qquad}
\newcommand{\abs}[1]{\left\vert#1\right\vert}
\newcommand{\set}[1]{\left\{#1\right\}}
\def\pmax{500,000}
\def\End{\mathrm{End}}
\newcommand{\keywords}[1]{{
        \list{}{\advance\topsep by -5ex \relax\small \leftmargin=1cm
        \labelwidth=0pt \listparindent=0pt \itemindent\listparindent
        \rightmargin\leftmargin}\item[\hskip\labelsep \bfseries
        Keywords:] {#1} \endlist}}
\begin{document}

\pagestyle {plain}
\pagenumbering{arabic}


\title{\bf On group structures realized by
elliptic curves over a fixed finite field}


\author{
          \sc{Reza Rezaeian Farashahi} \and \sc{Igor E.~Shparlinski}  \\ \\
          {Department of Computing}\\
          {Macquarie University} \\
          {Sydney, NSW 2109, Australia}\\
          \tt  \{reza,igor\}@ics.mq.edu.au  \\
}

\date{}
\maketitle

\begin{abstract} We obtain explicit formulas for the number of
non-isomorphic elliptic curves with a given group structure
(considered as an abstract abelian group). Moreover, we give
explicit formulas for the number of distinct group structures of all
elliptic curves over a finite field. We use these formulas to derive
some asymptotic estimates and tight upper and lower bounds for
various counting functions related to classification of elliptic
curves accordingly to their group structure. Finally, we present
results of some numerical tests which exhibit several interesting
phenomena in the distribution of group structures. We pose getting
an explanation to these as an open problem.
 \end{abstract}



\section{Introduction}
\label{sec:intro}

\subsection{Background}

Let $\F_q$ be the finite field of characteristic $p$ with $q=p^k$ elements. An elliptic curve $E$ over a finite field $\F_q$ is given by the { \it Weierstrass equation}
\begin{equation}
\label{eq:wei}
y^2+a_1xy+a_3y=x^3+a_2x^2+a_4x+a_6,
\end{equation}
where the coefficients $a_1,a_2,a_3,a_4,a_6$ are in $\F_q$; see~\cite{Silv} for a general background and see~\cite{ACDFLNV} for cryptographic interests on elliptic curves.

As usual, let $E(\F_q)$ be the set of $\F_q$-rational points on elliptic curve $E$ including the point at infinity denoted by $\cO$. It is known, see~\cite{ACDFLNV, Silv, Wash}, that $E(\F_q)$ is a finite {\it Abelian} group with the neutral element $\cO$ and the cardinality of this group satisfies the {\it Hasse-Weil} bound as
$$\abs{\#E(\F_q)-q-1} \le 2\sqrt{q}.$$

It is also known, see~\cite{Silv, Wash}, that the group structure of $E(\F_q)$ is expressed by the group isomorphism
$$E(\F_q) \simeq \Z_m\times \Z_{n},$$
where unique integers $m, n$ satisfy
\begin{equation}
m \mid n \mand m \mid q-1.
\end{equation}

For the prime power $q$ and positive integers $m,n$, let $G(q;m,n)$ be the number of distinct elliptic curves $E$ over $\F_q$  (up to isomorphism over $\F_q$)  such that $E(\F_q) \simeq \Z_m\times \Z_{n}$. Moreover, let $F(q)$ be the the number of distinct group structures of all elliptic curves over the finite field $\F_q$. In this paper, we give explicit formulas for $G(q; m,n)$ and $F(q)$, for all prime powers $q$ and all possible values of $m,n$. We use these
formulas to derive tight upper and lower bounds on  $F(q)$
and aslo an asymptotic formula for the average value
of $F(q)$ on average over prime powers $q \le Q$ as $Q\to \infty$.

Finally, we also present some numerical results concerning
the frequency of the  ``most common'' group structure over $\F_q$,
that is, for
\begin{equation}
\label{eq:Hq}
G(q) = \max_{n,m} G(q; m,n).
\end{equation}
These results reveal several interesting phenomena in the behaviour
of this function and also of the parameters $m$, $n$ and $t = p+1 -
mn$ on which it value is achieved.

Finally, we note the distribution of group structures generated by
elliptic curves generated by all possibke finite field $\F_q$ has been
studied in~\cite{BFS}.

\subsection{Notation}

Throughout the paper, $p$ always denotes a prime and
$q = p^k$ always denotes a prime power.

 Let $t$ be an integer such that $\gcd(t,p)=1$ and $t^2 \le 4q$. Let $\Delta=t^2-4q$ and let $c_t$ be the largest integer such that
$$
c_t^2 \mid \Delta \mand \Delta/c_t^2 \equiv 0 \text{ or } 1 \pmod 4.
$$
Let $s_t$ be the largest integer such that
$$s_t^2 \mid q+1-t \mand s_t \mid q-1.
$$
We note that $s_t \mid c_t$.

For each positive divisor $m$ of $s_t$,  let
$$
\cM_t(m)=\set{e \in \N:   m \mid e,\ e \mid c_t}
$$
and
$$
\cS_t(m)= \cM_t(m) \setminus \bigcup_{ \substack{ l\in \N, \, l> m\\  m \mid l,\, l \mid s_t}} \cM_t(l).
$$

As usual, we use $d(s)$ and $\varphi(s)$
to denote the number of positive integer divisors
and  the Euler  function of $s$, respectively.

Moreover, for every negative integer $D$ with $D \equiv 0$ or $1 \pmod 4$ we denote by $h(D)$ the class number of some quadratic order of discriminant $D$.

The implied constants in the symbols `$O$', `$\ll$'
and `$\gg$' are absolute.
We recall that the notations $U = O(V)$, $U \ll V$ and
$V \gg U$ are all
equivalent to the assertion that the inequality $|U|\le cV$ holds for some
constant $c>0$.

\section{Our Results}

\subsection{Explicit formulas}

For $p>2$, let $\chi_p$ be the quadratic character modulo $p$. So, for a positive integer $x$, we have $\chi_p(x)=0,1$ or $-1$, if $x \equiv 0 \pmod p$, $x \equiv a^2 \pmod p$ for some $a \not\equiv 0 \pmod p$ or $x \not\equiv a^2 \pmod p$ for all $a$, respectively.
Moreover, for $p=2$, let $\chi_p(x)$ equals $0,1$ or $-1$ if $x \equiv 0 \pmod 2$, $x \equiv \pm 1 \pmod 8$ or $x \equiv \pm 3 \pmod 8$, respectively.

\begin{theorem}
\label{thm:Gmn}
Let $q=p^k$ be a power of a prime $p$. Let $m$, $n$ be positive integers. Let $t=q+1-mn$ and $\Delta=t^2-4q$. Then, $G(q;m,n)$, that is, the number of $\F_q$-isomorphism classes of elliptic curves $E$ over $\F_q$ such that $E(\F_q) \simeq \Z_m\times \Z_{n}$, equals:
\begin{enumerate}
  \item  $\displaystyle{\sum_{l\in \cS_{t}(m)} h\(\frac{\Delta}{l^2}\)}$,\ if $\gcd(t,p)=1$, $t^2\le 4q$, $m \mid n$, and $m \mid q-1$
  \item $h(-4p)$,\ if $k$ is odd, $m=1$ and $n=q+1$
  \item $h(-p)$,\ if $k$ is odd, $p \equiv 3 \pmod 4$, $m=2$, and $n=(q+1)/2$
  \item $1$,\ if $k$ is odd, $p=2$ or $3$, $m=1$, and $n=q+1 \pm \sqrt{pq}$
  \item $1-\chi_p(-4)$,\ if $k$ is even, $m=1$, and $n=q+1$
  \item $1-\chi_p(-3)$,\ if $k$ is even, $m=1$, and  $n=q+1 \pm \sqrt{q}$
  \item $(p+6-4\chi_p(-3)-3\chi_p(-4))/12$,\ if $k$ is even, and $m=n=\sqrt{q} \pm 1$
  \item $0$,\ otherwise.
\end{enumerate}
\end{theorem}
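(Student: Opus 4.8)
The plan is to treat case~1 (the ordinary curves) via the theory of complex multiplication over finite fields, cases~2--7 (the supersingular curves) via Waterhouse's classification of the Frobenius traces realized over $\F_q$, and case~8 by exhaustion. First I would record the reductions common to all cases. Since $mn=\#E(\F_q)=q+1-t$, the integer $t$ is determined by $(m,n)$; any $E$ with $E(\F_q)\simeq\Z_m\times\Z_n$ satisfies the Hasse bound $t^2\le4q$, and, taking the presentation with $m\mid n$, has full rational $m$-torsion, so the Weil pairing forces $m\mid q-1$. If $\gcd(t,p)=1$ the curve is ordinary (case~1); otherwise it is supersingular and, by Waterhouse, $t$ is one of $0,\pm\sqrt q,\pm\sqrt{pq},\pm2\sqrt q$, subject to the parity of $k$ and the congruence conditions reappearing in cases~2--7. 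A pair $(m,n)$ producing no admissible $t$, or an admissible $t$ all of whose curves carry a group structure different from $\Z_m\times\Z_n$, contributes $0$, which together with the supersingular count gives case~8. All curves are counted up to $\F_q$-isomorphism.

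In case~1 I would proceed as follows. Let $\pi$ be Frobenius and $K=\Q(\sqrt\Delta)$, with ring of integers $\cO_K=\Z+\Z\omega_K$; then $\Z[\pi]$ has conductor $c_t$ and $\pi-1$ has $\omega_K$-coordinate exactly $c_t$ in $\cO_K$. By the results of Deuring and Waterhouse, for each order $\cO_f=\Z+f\cO_K$ with $f\mid c_t$ the number of $E$ with $\End(E)\simeq\cO_f$ and trace $t$ equals $h(\Delta/e^2)$, where $e=c_t/f$; and by Lenstra's description of the group of rational points, $E(\F_q)\simeq\cO_f/(\pi-1)\cO_f$ as abelian groups, so the first invariant factor of $E(\F_q)$ is the largest $d$ with $(\pi-1)/d\in\cO_f$. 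Now $(\pi-1)/d\in\cO_K$ iff $d$ divides the trace $t-2$ and $d^2$ divides the norm $q+1-t$, equivalently iff $d\mid q-1$ and $d^2\mid q+1-t$; the set of such $d$ is closed under divisors and under least common multiples, hence is precisely the set of divisors of its maximum $s_t$. Comparing $\omega_K$-coordinates then shows $(\pi-1)/d\in\cO_f$ iff $d\mid s_t$ and $df\mid c_t$, so the first invariant factor equals $\gcd(s_t,c_t/f)=\gcd(s_t,e)$. Summing over $f$, equivalently over $e\mid c_t$,
$$G(q;m,n)=\sum_{\substack{e\mid c_t\\\gcd(s_t,e)=m}}h\(\frac{\Delta}{e^2}\),$$
and since the hypotheses of case~1 force $m\mid s_t$, the condition $\gcd(s_t,e)=m$ amounts to $m\mid e\mid c_t$ together with $l\nmid e$ for every $l$ with $m\mid l$, $l>m$, $l\mid s_t$; that is, $e\in\cM_t(m)\setminus\bigcup_{m\mid l,\,l>m,\,l\mid s_t}\cM_t(l)=\cS_t(m)$, which is case~1.

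For cases~2--7 I would run through Waterhouse's supersingular traces. When $t=0$ the group has order $q+1$ with cyclic odd part, and cyclic $2$-part except when $k$ is odd and $p\equiv3\pmod4$, in which case $4\mid q+1$ and both $\Z_{q+1}$ and $\Z_2\times\Z_{(q+1)/2}$ occur, according as $\End(E)$ is $\Z[\sqrt{-p}]$ or $\Z[(1+\sqrt{-p})/2]$; by Deuring's theory their numbers are the class numbers $h(-4p)$ and $h(-p)$ (cases~2 and~3). When $k$ is even, $(\pi\mp\sqrt q)^2=0$ in the zero-divisor-free ring $\End(E)$ whenever $t=\pm2\sqrt q$, so Frobenius is the scalar $\pm\sqrt q$ and $E(\F_q)=E[\sqrt q\mp1]\simeq(\Z_{\sqrt q\mp1})^2$, with the number of classes equal to the number of supersingular $j$-invariants in characteristic $p$, namely $(p+6-4\chi_p(-3)-3\chi_p(-4))/12$ (case~7). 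The remaining even-$k$ traces occur only conditionally: $t=0$ needs $p\not\equiv1\pmod4$, that is $\chi_p(-4)=-1$, whence the count $1-\chi_p(-4)$ (case~5); $t=\pm\sqrt q$ needs $p\not\equiv1\pmod3$, that is $\chi_p(-3)=-1$, whence $1-\chi_p(-3)$ (case~6); and the extra traces $\pm\sqrt{pq}$, which occur only for $p\in\{2,3\}$ with $k$ odd, each contribute one cyclic class (case~4). Any $(m,n)$ not covered by cases~1--7 admits no curve, which is case~8.

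The hardest part will be case~1: one has to identify the intrinsically defined $s_t$ with the lattice content of $\pi-1$, and then fold the Deuring count together with Lenstra's isomorphism into the clean inclusion--exclusion over the conductors $f\mid c_t$. A secondary nuisance is the curves with $j=0$ or $1728$, which have class number $1$ but extra automorphisms: this is exactly why the supersingular counts in cases~5--7 carry the corrections $\chi_p(-3),\chi_p(-4)$, whereas in the ordinary case such curves have $\End(E)\simeq\Z[i]$ or $\Z[\zeta_3]$ and are already pinned down by their trace, so case~1 needs no correction.
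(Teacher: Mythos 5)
Your proposal is correct and follows the same overall strategy as the paper: split into the ordinary case (count by endomorphism rings, each contributing a class number) and the supersingular cases (read off from the Deuring--Waterhouse--Schoof classification of traces and of cyclic versus non-cyclic curves). The one genuine difference is in how the key lemma for Case~1 is obtained. The paper quotes Schoof's Proposition~3.7 --- $E[l]\subseteq E(\F_q)$ if and only if $l\mid q-1$, $l^2\mid\#E(\F_q)$ and $O(\Delta/l^2)\subseteq\End(E)$ --- and packages it as Lemmas~\ref{lem:EndG} and~\ref{lem:EndE}, so that the orders realizing $\Z_m\times\Z_n$ are exactly those indexed by $\cS_t(m)$. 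You instead derive the same dictionary from Lenstra's isomorphism $E(\F_q)\simeq\cO_f/(\pi-1)\cO_f$, computing the first invariant factor as $\gcd(s_t,c_t/f)$ by comparing coordinates in $\cO_K$; your identification of $s_t$ with $\gcd(a,c_t)$ (where $\pi-1=a+c_t\omega_K$) via the trace/norm divisibility criterion is correct, and your condition $\gcd(s_t,e)=m$ on $e\mid c_t$ is equivalent to $e\in\cS_t(m)$. Your route is more self-contained and makes the inclusion--exclusion structure of $\cS_t(m)$ transparent, at the cost of invoking Lenstra's group-structure theorem rather than a torsion criterion; the paper's route is shorter because the needed equivalence is taken off the shelf from Schoof. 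The supersingular cases are handled identically in both arguments.
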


The following result gives explicit formulas for the number $F(q)$ of distinct group structures of all elliptic curves over $\F_q$.

\begin{theorem}
\label{thm:G}
Let $q=p^k$ be a power of a prime $p$. For the number $F(q)$ of distinct group structures of all elliptic curves over $\F_q$, we have
\begin{eqnarray*}
F(q)& =& \sum_{ \substack{t \in \Z,\ t^2 \le 4q, \\ \gcd(t,p)=1 } } d(s_t)\\
& & \qquad +~
\left\{
\begin{array}{ll}
\displaystyle{
1+\frac{1-\chi_p(-1)}{2},} &  \text{ if } k \text{ is odd, }
p > 3,\\
\displaystyle{3+\frac{1-\chi_p(-1)}{2},} &  \text{ if } k \text{ is odd, } p = 2,3,\\
\displaystyle{3+\frac{1-\chi_p(-1)}{2}-\chi_p(-3), } &  \text{ if } k \text{ is even, } p >3,\\
5, &  \text{ if } k \text{ is even, } p = 2,3.
\end{array}
\right.
\end{eqnarray*}
\end{theorem}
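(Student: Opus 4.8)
The plan is to read $F(q)$ off directly from Theorem~\ref{thm:Gmn}. Since two pairs $(m,n)$, $(m',n')$ with $m\mid n$ and $m'\mid n'$ give isomorphic groups $\Z_m\times\Z_n$ only if they coincide, and every group structure that occurs has this normal form, we have
$$
F(q)=\#\set{(m,n):\ m\mid n,\ G(q;m,n)\ge 1},
$$
so it suffices to enumerate the pairs for which one of the cases 1--7 of Theorem~\ref{thm:Gmn} applies with a nonzero value. The first observation is that these pairs fall into two disjoint families according to whether $t=q+1-mn$ satisfies $\gcd(t,p)=1$ (case~1) or $p\mid t$ (cases 2--7): the pair determines $t$, and each of the traces occurring in cases 2--7, namely $t\in\set{0,\pm\sqrt q,\pm 2\sqrt q,\pm\sqrt{pq}}$, is divisible by $p$. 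Hence the two contributions can be counted separately and added.

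First I would handle case~1. Fix $t$ with $\gcd(t,p)=1$ and $t^2\le 4q$, and put $a=q+1-t$ and $b=q-1$ (note $\Delta=t^2-4q<0$ here, so the class numbers below make sense). Comparing $\ell$-adic valuations for each prime $\ell$ shows that $\set{m\in\N:\ m^2\mid a,\ m\mid b}$ is precisely the set of divisors of $s_t$, hence has $d(s_t)$ elements. For every such $m$ the pair $(m,a/m)$ has $m\mid n$, $m\mid q-1$ and associated trace $t$, so it lies in case~1; and its value $\sum_{l\in\cS_t(m)}h(\Delta/l^2)$ is at least $h(\Delta/m^2)\ge 1$, because $m\in\cM_t(m)$ (as $m\mid s_t\mid c_t$) while $m\notin\cM_t(l)$ whenever $l>m$, $m\mid l\mid s_t$, so $m\in\cS_t(m)$. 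Conversely, any pair in case~1 arises in this way from $t=q+1-mn$, since $m\mid n$ and $m\mid q-1$ force $m\mid s_t$. Therefore case~1 contributes exactly $\sum_{t\in\Z,\ t^2\le 4q,\ \gcd(t,p)=1}d(s_t)$ group structures.

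Next I would tabulate cases 2--7. These describe a short explicit list of pairs, pairwise distinct (for fixed parity of $k$ the products $mn=q+1-t$ differ, except that cases 2 and 3 both have $t=0$ but different $m$), and for each one must decide whether the stated value is positive. For cases 2 and 3 this is immediate from $h(-4p),h(-p)\ge 1$; for case~7 the value $(p+6-4\chi_p(-3)-3\chi_p(-4))/12$ is a positive integer for every $p$ (for $p\in\set{2,3}$ it equals $12/12=1$, and for $p\ge 5$ it is at least $(p-1)/12>0$), so both pairs $m=n=\sqrt q\pm 1$ always occur; for cases 5 and 6 the value is positive exactly when $\chi_p(-4)\ne 1$, respectively $\chi_p(-3)\ne 1$. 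Splitting by the parity of $k$ and by $p\in\set{2,3}$ versus $p>3$, and using $\chi_p(-4)=\chi_p(-1)$ for odd $p$ together with the explicit values $\chi_2(-1)=1$, $\chi_2(-3)=-1$, $\chi_2(-4)=0$ and $\chi_3(-1)=-1$, $\chi_3(-3)=0$, $\chi_3(-4)=-1$, the count of structures coming from cases 2--7 collapses to exactly the four displayed expressions. Adding the two contributions yields the claimed formula.

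The valuation computation identifying the divisors of $s_t$ and the verification that $m\in\cS_t(m)$ are routine; I expect the only step needing genuine care to be the last one, namely the bookkeeping for cases 2--7 --- confirming that no pair is double-counted, that the pertinent class numbers and the case~7 expression never vanish, and that the sub-case counts reassemble precisely into the four expressions in the statement, with $p=2$ and $p=3$ (where several $\chi_p$-values degenerate) treated separately.
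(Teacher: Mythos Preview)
Your argument is correct and follows essentially the same route as the paper: decompose by the trace $t=q+1-mn$, observe that for $\gcd(t,p)=1$ the admissible $m$ are exactly the divisors of $s_t$ (so the contribution is $d(s_t)$), and then tabulate the supersingular cases. The only difference is cosmetic: the paper appeals directly to Lemma~\ref{lem:GS} (the existence criterion for group structures) to read off when a pair $(m,n)$ occurs, whereas you go through Theorem~\ref{thm:Gmn} and add the small extra check that $m\in\cS_t(m)$ to guarantee the class-number sum is nonempty --- a detour that the paper avoids but which costs nothing.
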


\subsection{Estimates and average values}

We now present explicit upper and lower bounds on $F(q)$.

\begin{theorem}
\label{thm:LUG}
Let $q=p^k$ be a power of a prime $p$. For the number $F(q)$ of distinct group structures of all elliptic curves over $\F_q$, we have
$$ \frac{2\pi^2}{3}\sqrt{q}\(1-\frac{1}{p}\)+d(q-1)+5 > F(q) >
\left\{
\begin{array}{ll}
2\sqrt{q}+2, &   \text{ if } p=2,\\
\displaystyle 5\sqrt{q}\(1-\frac{1}{p}\)-2, & \text{ if } p \ge 3
\end{array}
\right.
$$
\end{theorem}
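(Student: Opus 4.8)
The plan is to feed the exact formula of Theorem~\ref{thm:G} into elementary lattice‑point counting. Throughout write $F(q)=S(q)+C_{p,k}$, where $S(q)=\sum_{|t|\le 2\sqrt q,\ \gcd(t,p)=1} d(s_t)$ and $1\le C_{p,k}\le 5$ is the explicit constant appearing in that theorem. The one structural fact about $s_t$ I need is that, for fixed admissible $t$, the set $\{d\in\N:\ d^2\mid q+1-t,\ d\mid q-1\}$ is closed under least common multiples: for each prime $\ell$ one has $2\max(v_\ell(d_1),v_\ell(d_2))=\max(2v_\ell(d_1),2v_\ell(d_2))\le v_\ell(q+1-t)$, so $\operatorname{lcm}(d_1,d_2)$ is again in the set. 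Hence this set is precisely the set of divisors of its largest element $s_t$, and therefore
$$d(s_t)=\#\{e\in\N:\ e\mid q-1,\ e^2\mid q+1-t\}.$$

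For the upper bound I substitute this identity into $S(q)$ and interchange the summations, getting $S(q)=\sum_{e\mid q-1}\#\{t:\ |t|\le 2\sqrt q,\ \gcd(t,p)=1,\ e^2\mid q+1-t\}$. Since $q+1-t$ lies in $[(\sqrt q-1)^2,(\sqrt q+1)^2]$ and is positive, only divisors $e\le\sqrt q$ contribute a nonzero inner term, and the pairing $e\leftrightarrow(q-1)/e$ shows there are at most $\tfrac12 d(q-1)+O(1)$ of those. For each such $e$ (automatically coprime to $p$, as $p\nmid q-1$) the admissible $t$ form the $p$‑coprime part of an arithmetic progression of modulus $e^2$ inside an interval of length $4\sqrt q$, so the inner count is at most $\tfrac{4\sqrt q}{e^2}(1-1/p)+2(1-1/p)$. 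Summing over $e$,
$$S(q)<4\sqrt q\,(1-1/p)\sum_{e\mid q-1}\frac{1}{e^2}+(1-1/p)\,d(q-1)+O(1),$$
and since $q-1$ is coprime to $p$ one has $\sum_{e\mid q-1}e^{-2}<\prod_{\ell\neq p}(1-\ell^{-2})^{-1}=\tfrac{\pi^2}{6}(1-p^{-2})$, so the right‑hand side is below $\tfrac{2\pi^2}{3}\sqrt q\,(1-1/p)(1-p^{-2})+(1-1/p)\,d(q-1)+O(1)$. The surplus factor $1-p^{-2}$ on the main term, together with the saving $1/p$ in the coefficient of $d(q-1)$, leaves room to absorb $C_{p,k}$ and the $O(1)$ and to reach the stated form $\tfrac{2\pi^2}{3}\sqrt q(1-1/p)+d(q-1)+5$, after verifying directly the finitely many small $q$ for which this room is insufficient.

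For the lower bound I start from $S(q)\ge\#\{t:\ |t|\le2\sqrt q,\ \gcd(t,p)=1\}$, sharpened using that $d(s_t)\ge2$ whenever $2\mid q-1$ and $4\mid q+1-t$, and $d(s_t)\ge3$ whenever $4\mid q-1$ and $16\mid q+1-t$. When $p=2$ (so $2\nmid q-1$) I use only $d(s_t)\ge1$: with $B=\lfloor2\sqrt q\rfloor$, the number of odd $t\in[-B,B]$ is at least $B$, which equals $2\sqrt q$ when $k$ is even (and then $C_{p,k}=5$) and strictly exceeds $2\sqrt q-1$ when $k$ is odd (then $2\sqrt q\notin\Z$ and $C_{p,k}=3$), and in both cases $F(q)>2\sqrt q+2$. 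When $p\ge3$ we have $2\mid q-1$, so $S(q)\ge N_1+N_2$ with $N_1=\#\{t:|t|\le2\sqrt q,\ p\nmid t\}$ and $N_2=\#\{t:|t|\le2\sqrt q,\ p\nmid t,\ 4\mid q+1-t\}$; writing $B=\lfloor2\sqrt q\rfloor=p\beta+\rho$ one gets $N_1=2\beta(p-1)+2\rho$, and counting in the progression $t\equiv q+1\pmod4$ and removing multiples of $p$ bounds $N_2\ge\tfrac B2(1-1/p)-\tfrac32$, whence $N_1+N_2\ge\tfrac52 B(1-1/p)-\tfrac32$. Inserting $B=2\sqrt q$ ($k$ even) or $B>2\sqrt q-1$ ($k$ odd) and the explicit $C_{p,k}$ from Theorem~\ref{thm:G} — in particular $C_{p,k}=2$ when $k$ is odd and $p\equiv3\pmod4$, and $C_{p,k}=4$ when $p=3$ — settles every case except $k$ odd with $p\equiv1\pmod4$, where $C_{p,k}=1$ is too weak; but there $4\mid q-1$, so the $t$ with $16\mid q+1-t$ add roughly $\tfrac{\sqrt q}{4}(1-1/p)$ more, enough to cover the deficit once $q$ exceeds an absolute bound, and the remaining finitely many prime fields are checked directly via Theorem~\ref{thm:Gmn}.

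The step I expect to be the real obstacle is making the $O(1)$ error terms precise enough to land exactly the constants $d(q-1)+5$ and $-2$ rather than slightly weaker ones: this is what forces the divisor‑pairing refinement (to keep the coefficient of $d(q-1)$ strictly below $1$, not near $2$) and the parity‑and‑residue case split in the lower bound, and it makes a finite explicit verification of small $q$ essentially unavoidable.
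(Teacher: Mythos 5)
Your overall strategy coincides with the paper's: both proofs rest on the identity $d(s_t)=\#\{e:\ e\mid q-1,\ e^2\mid q+1-t\}$ (your lcm--closure argument is a clean way to justify that this set is exactly the divisor set of $s_t$), interchange the two summations, estimate each inner count by $\tfrac{4\sqrt q}{e^2}(1-1/p)$ plus a bounded error, bound $\sum_{e\mid q-1}e^{-2}$ by $\zeta(2)$ for the main term, control the accumulated error by the number of divisors of $q-1$ below $\sqrt q$ (at most $\tfrac12 d(q-1)$, by pairing $e\leftrightarrow (q-1)/e$), and for the lower bound retain only $e=1$ when $p=2$ and $e\in\{1,2\}$ when $p\ge 3$. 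So this is not a different route, only a different bookkeeping of the same sums.

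The genuine gap is in how you propose to close the upper bound. You plan to absorb the unquantified $O(1)$ and the supersingular constant $C_{p,k}$ into the surpluses $\tfrac{2\pi^2}{3}\sqrt q\,(1-1/p)p^{-2}$ (from the factor $1-p^{-2}$) and $\tfrac1p d(q-1)$, checking "finitely many small $q$" by hand. But both surpluses tend to $0$ along infinite families of $q$: for $q=p$ prime the first is of order $p^{-3/2}$ and the second is $d(p-1)/p\to 0$; worse, for $q=p^2$ with $p\equiv 11\pmod{12}$ one has $C_{p,k}=5$ (the extremal case) while the first surplus is of order $1/p$ and the second is $d(p^2-1)/p\to 0$, so you would need your $O(1)$ to be exactly $0$ there. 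Hence the exceptional set of your argument is not finite, and a finite verification cannot rescue it. The paper avoids this by exact accounting: the inner count is at most $\left[4\sqrt q/m^2\right]-\left[4\sqrt q/(m^2p)\right]+1<\tfrac{4\sqrt q}{m^2}(1-1/p)+2$; each of the at most $\tfrac12 d(q-1)$ divisors below $\sqrt{q-1}$ contributes error less than $2$ (giving exactly the $+d(q-1)$); and the leftover --- the at most one divisor of $q-1$ in $[\sqrt{q-1},\sqrt q+1)$ together with all supersingular contributions $\delta_q(m)$ --- is checked case by case to total at most $5$. You need to carry out that same exact bookkeeping; the asymptotic surpluses you invoke are not there to lean on. Your lower bound is in better shape ($N_1+N_2$ is the paper's $\#\cH_q(1)+\#\cH_q(2)$, and the $16\mid q+1-t$ device for $k$ odd, $p\equiv 1\pmod 4$ is a workable substitute for the paper's sharper count of $\#\cH_q(2)$ when $q\equiv 1\pmod 4$), but there too the constants ($-\tfrac32$ versus the $-2$ you must beat) have to be pinned down exactly rather than estimated.
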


We also show that the bounds of Theorem~\ref{thm:LUG} are
asymptotically tight.

\begin{theorem}
\label{thm:limG}
When $q = p^k\to \infty$ via the set of prime powers, we have
\begin{enumerate}
\item \quad $\displaystyle
\limsup_{q \to \infty} \frac{F(q)}{\sqrt{q}\(1-1/p\)}
=  \frac{2\pi^2}{3}$;

\item \quad $\displaystyle
\liminf_{\substack{q \to \infty\\ q~\mathrm{odd}}}
\frac{F(q)}{\sqrt{q}\(1-1/p\)} = 5$;

\item \quad $\displaystyle\liminf_{k \to \infty}
\frac{F(2^k)}{2^{k/2}} = 2$.
\end{enumerate}
\end{theorem}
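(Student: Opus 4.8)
The plan is to reduce all three statements to estimating the main term $\Sigma(q):=\sum_{t}d(s_t)$ of Theorem~\ref{thm:G}, where the sum runs over $t\in\Z$ with $t^2\le4q$ and $\gcd(t,p)=1$, since the remaining adjustment terms there are $O(1)$. First I would record the elementary fact that, for such $t$, the set of positive integers $d$ with $d^2\mid q+1-t$ and $d\mid q-1$ is closed under least common multiples (because $\mathrm{lcm}(d,e)^2=\mathrm{lcm}(d^2,e^2)$ divides $q+1-t$, and $\mathrm{lcm}(d,e)\mid q-1$), hence equals the set of divisors of its maximal element, which by definition is $s_t$; thus $d(s_t)=\#\{d:\ d^2\mid q+1-t,\ d\mid q-1\}$. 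Interchanging the order of summation,
\[
\Sigma(q)=\sum_{d\mid q-1}\#\bigl\{t\in\Z:\ |t|\le2\sqrt q,\ \gcd(t,p)=1,\ t\equiv q+1\!\!\pmod{d^2}\bigr\}.
\]
Since $d\mid q-1$ forces $\gcd(d,p)=1$, the Chinese Remainder Theorem shows the inner count equals $\frac{4\sqrt q}{d^2}\bigl(1-\frac1p\bigr)+O(1)$; moreover a divisor $d$ contributing a nonzero count must satisfy $d\le\sqrt q+1$, since for $q\ge2$ the value $t=q+1$ lies outside the Hasse interval, so $q+1-t$ is a nonzero multiple of $d^2$ with $d^2\le|q+1-t|\le q+2\sqrt q+1=(\sqrt q+1)^2$. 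Summing over $d$ and absorbing $\sum_{d\mid q-1}O(1)=O(d(q-1))=q^{o(1)}$ together with the tail $\sum_{d>\sqrt q}d^{-2}=O(q^{-1/2})$ into an error term, I obtain
\[
F(q)=4\sqrt q\Bigl(1-\frac1p\Bigr)\sum_{d\mid q-1}\frac1{d^2}+O\bigl(q^{o(1)}\bigr),
\qquad\text{hence}\qquad
\frac{F(q)}{\sqrt q\,(1-1/p)}=4\sum_{d\mid q-1}\frac1{d^2}+o(1).
\]
Everything then hinges on how small or large the finite sum $\sum_{d\mid q-1}d^{-2}$ can be made along suitable families of prime powers.

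For part~(1), the bound $\sum_{d\mid q-1}d^{-2}<\sum_{d\ge1}d^{-2}=\pi^2/6$ gives $\limsup\le2\pi^2/3$, consistent with the upper bound of Theorem~\ref{thm:LUG}. For the reverse inequality, given $N$ I would use Dirichlet's theorem to pick a prime $p\equiv1\pmod{\mathrm{lcm}(1,\dots,N)}$ and set $q=p$; then $1,\dots,N$ all divide $q-1$, so $\sum_{d\mid q-1}d^{-2}\ge\sum_{d\le N}d^{-2}\to\pi^2/6$, and $F(q)/(\sqrt q(1-1/p))\to2\pi^2/3$ along this subsequence, giving $\limsup=2\pi^2/3$. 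For part~(2), if $q$ is odd then $1,2\mid q-1$, so $\sum_{d\mid q-1}d^{-2}\ge5/4$ and $\liminf\ge5$ (equivalently, the $p\ge3$ lower bound of Theorem~\ref{thm:LUG}). To see $5$ is attained, for each $B$ I would use the Chinese Remainder Theorem and Dirichlet's theorem to select a prime $q$ with $q\equiv3\pmod4$ and $q\not\equiv1\pmod\ell$ for every prime $3\le\ell\le B$; then $q-1=2m$ with $m$ odd and free of prime factors $\le B$, so $1$ and $2$ are the only divisors of $q-1$ not exceeding $B$, whence $5/4\le\sum_{d\mid q-1}d^{-2}\le5/4+\sum_{n>B}n^{-2}\to5/4$ and $F(q)/(\sqrt q(1-1/p))\to5$.

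For part~(3), take $q=2^k$, so $1-1/p=\tfrac12$ and $F(2^k)/2^{k/2}=2\sum_{d\mid2^k-1}d^{-2}+o(1)\ge2+o(1)$ (or quote the $p=2$ lower bound of Theorem~\ref{thm:LUG}), giving $\liminf\ge2$. For the matching upper estimate I would restrict to primes $k$: any prime $\ell\mid2^k-1$ has $\mathrm{ord}_\ell(2)\mid k$, and since this order exceeds $1$ it equals $k$, so $\ell\equiv1\pmod k$ and $\ell\ge k+1$; hence every divisor of $2^k-1$ other than $1$ is $\ge k+1$, so $\sum_{d\mid2^k-1}d^{-2}\le1+\sum_{n>k}n^{-2}\to1$ and $F(2^k)/2^{k/2}\to2$ along primes $k$, giving $\liminf_{k}F(2^k)/2^{k/2}=2$.

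The routine steps are the interval-counting estimate via the Chinese Remainder Theorem and the bookkeeping of error terms (harmless because $\sqrt q(1-1/p)\ge\tfrac12\sqrt q$ always dominates $q^{o(1)}$). The real content, and the part I expect to demand the most care, is the construction of the three extremal families: for parts~(1) and~(2) the only nontrivial ingredient is Dirichlet's theorem on primes in arithmetic progressions, where one must verify that the prescribed residue class is coprime to the modulus (true by construction), while part~(3) rests solely on the elementary observation that a Mersenne number $2^k-1$ with $k$ prime has all its prime factors congruent to $1$ modulo $k$.
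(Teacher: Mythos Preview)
Your proposal is correct, and the overall framework---reducing everything to the asymptotic
\[
\frac{F(q)}{\sqrt{q}\,(1-1/p)}=4\sum_{d\mid q-1}\frac{1}{d^2}+o(1)
\]
and then constructing extremal families of prime powers---matches the paper's approach. Your derivation of this formula via the lcm-closure of $\{d:\ d^2\mid q+1-t,\ d\mid q-1\}$ is a clean repackaging of what the paper obtains through~\eqref{eq:GHdel} and~\eqref{eq:Hq(m)}. Parts~(1) and~(3) use the same constructions as the paper (primes $p\equiv1\pmod{\mathrm{lcm}(1,\dots,N)}$, and $q=2^k$ with $k$ prime).

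The one genuine difference is in Part~(2). The paper appeals to Heath-Brown's theorem that there are infinitely many primes $p$ with $(p-1)/2$ either prime or a product of two primes each exceeding $p^{0.27}$; for such $p$ the only small divisors of $p-1$ are $1$ and $2$, forcing $\sum_{d\mid p-1}d^{-2}=5/4+o(1)$. You instead use only the Chinese Remainder Theorem and Dirichlet's theorem to manufacture, for each $B$, primes $q\equiv3\pmod4$ with $q\not\equiv1\pmod\ell$ for every odd prime $\ell\le B$, so that $q-1=2m$ with $m$ odd and $B$-rough; this yields the same conclusion. Your route is strictly more elementary---Heath-Brown's result is a substantial sieve theorem---and suffices because the liminf only requires \emph{some} sequence along which the small divisors of $q-1$ are exactly $\{1,2\}$, not the stronger structural information Heath-Brown provides. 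The paper's choice gives a single explicit sequence of primes, whereas yours uses a diagonal argument over $B$; both are fine for the stated limit.
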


Finally, we derive an asymptotic formula for the
average value of $F(q)$.

\begin{theorem}
\label{thm:asympG}
For $Q \to \infty$, when $q$ runs over via the set of prime powers, we have
$$
\sum_{q \le Q} F(q) =\(\vartheta+o(1)\)\frac{Q^{3/2}}{\log Q},
$$
where
$$
\vartheta = \frac{8}{3}
\sum_{m =1}^\infty \frac{1}{m^2 \varphi(m)} = 3.682609\ldots.
$$
\end{theorem}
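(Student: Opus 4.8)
The plan is to derive the estimate from the exact formula of Theorem~\ref{thm:G}, which states that for every prime power $q=p^{k}$,
$$
F(q)=\sum_{\substack{t\in\Z,\ t^{2}\le 4q\\ \gcd(t,p)=1}} d(s_t)\;+\;O(1),
$$
the bounded term being the explicit case-by-case correction displayed there. Summing over all prime powers $q\le Q$, this correction contributes $O(Q/\log Q)=o(Q^{3/2}/\log Q)$. The proper prime powers $q=p^{k}$ with $k\ge 2$ are $O(\sqrt Q)$ in number, and for each of them $s_t^{2}\mid q+1-t$ with $1\le q+1-t<(\sqrt q+1)^{2}$ forces $d(s_t)\le d(q+1-t)\le q^{o(1)}$, so $\sum_{t^{2}\le 4q}d(s_t)\le q^{1/2+o(1)}$ and their overall contribution is $O(Q^{1+o(1)})=o(Q^{3/2}/\log Q)$. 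Thus it remains to evaluate
$$
S(Q):=\sum_{p\le Q}\ \sum_{0<|t|<2\sqrt p} d(s_t),
$$
where for a prime $p\ge 5$ the condition $\gcd(t,p)=1$ amounts to $t\ne 0$, and $s_t$ is the largest integer with $s_t^{2}\mid p+1-t$ and $s_t\mid p-1$.

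First I would open up the divisor function. The set $\{m\in\N:\ m^{2}\mid p+1-t,\ m\mid p-1\}$ is closed under passing to divisors and under taking least common multiples (using $\operatorname{lcm}(a,b)^{2}=\operatorname{lcm}(a^{2},b^{2})$), hence it is exactly the set of divisors of its largest element $s_t$; therefore $d(s_t)=\#\{m:\ m^{2}\mid p+1-t,\ m\mid p-1\}$. Interchanging the order of summation,
$$
S(Q)=\sum_{p\le Q}\ \sum_{m\mid p-1}\ \#\bigl\{t:\ 0<|t|<2\sqrt p,\ t\equiv p+1\pmod{m^{2}}\bigr\}.
$$
For fixed $p$ and $m$ the inner count is $4\sqrt p/m^{2}+O(1)$, since an interval of length $4\sqrt p$ meets a fixed residue class modulo $m^{2}$ in that many integers and discarding $t=0$ costs $O(1)$. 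Hence the $m$-sum equals $4\sqrt p\sum_{m\mid p-1}m^{-2}+O(d(p-1))$, and since $\sum_{p\le Q}d(p-1)\ll Q\log Q=o(Q^{3/2}/\log Q)$ we obtain, after swapping the (absolutely convergent) sums,
$$
S(Q)=4\sum_{m=1}^{\infty}\frac{1}{m^{2}}\sum_{\substack{p\le Q\\ m\mid p-1}}\sqrt p\;+\;o\!\left(\frac{Q^{3/2}}{\log Q}\right).
$$

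The remaining task — and the only genuinely delicate point — is to evaluate this double sum, i.e.\ to justify interchanging the sum over $m$ with the asymptotic $\sum_{p\le Q,\ m\mid p-1}\sqrt p\sim\frac{1}{\varphi(m)}\cdot\frac{2}{3}\cdot\frac{Q^{3/2}}{\log Q}$ (which for $m=1$ is just $\sum_{p\le Q}\sqrt p\sim\frac23 Q^{3/2}/\log Q$ by partial summation from the prime number theorem). I would split the sum at $m=\log Q$. For the tail the trivial bound $\#\{p\le Q:\ m\mid p-1\}\le 2Q/m$ (only $m\le Q$ occur) bounds its contribution by $\ll Q^{3/2}\sum_{m>\log Q}m^{-3}\ll Q^{3/2}(\log Q)^{-2}$, which is admissible. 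For the head $m\le\log Q$ the Siegel--Walfisz theorem applies uniformly, the moduli lying below a fixed power of $\log Q$: from $\pi(x;m,1)=\operatorname{li}(x)/\varphi(m)+O(x\exp(-c\sqrt{\log x}))$ and partial summation,
$$
\sum_{\substack{p\le Q\\ m\mid p-1}}\sqrt p=\frac{1}{\varphi(m)}\cdot\frac{2Q^{3/2}}{3\log Q}\,(1+o(1))+O\!\bigl(Q^{3/2}e^{-c\sqrt{\log Q}}\bigr),
$$
uniformly in $m\le\log Q$, with $o(1)$ independent of $m$. Substituting this back, using $\sum_{m\le\log Q}m^{-2}=O(1)$ to absorb the error term and $\sum_{m\le\log Q}\frac{1}{m^{2}\varphi(m)}\to\sum_{m\ge1}\frac{1}{m^{2}\varphi(m)}$ (the series converges because $\varphi(m)\gg m^{1-\eps}$), the head contributes $\frac{8}{3}\bigl(\sum_{m\ge1}\tfrac{1}{m^{2}\varphi(m)}\bigr)\frac{Q^{3/2}}{\log Q}(1+o(1))=(\vartheta+o(1))Q^{3/2}/\log Q$. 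Adding up the head, the tail, and the error terms collected earlier yields $\sum_{q\le Q}F(q)=(\vartheta+o(1))Q^{3/2}/\log Q$.

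I expect the main obstacle to be exactly this uniformity question; it is handled by the observation that only moduli $m$ up to $\log Q$ (indeed, up to any fixed power of $\log Q$) contribute, so the classical Siegel--Walfisz estimate suffices and one needs neither Bombieri--Vinogradov nor any unproved hypothesis. Everything else — the reduction to prime $q$, the combinatorial identity $d(s_t)=\#\{m:m^{2}\mid p+1-t,\ m\mid p-1\}$, the elementary lattice-point count, and the crude tail estimate — is routine.
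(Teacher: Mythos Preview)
Your proof is correct and follows essentially the same route as the paper: reduce to primes, express $F(p)$ as $4\sqrt{p}\sum_{m\mid p-1}m^{-2}+O(d(p-1))$, swap the sums over $p$ and $m$, split at $m=\log Q$, and apply Siegel--Walfisz on the head and a trivial count on the tail. The only cosmetic difference is that the paper quotes this expression for $F(p)$ directly from the machinery set up in the proof of Theorem~\ref{thm:LUG} (namely~\eqref{eq:GHdel} and~\eqref{eq:Hq(m)}), whereas you re-derive it from Theorem~\ref{thm:G} via the clean identity $d(s_t)=\#\{m:\ m^{2}\mid p+1-t,\ m\mid p-1\}$; both lead to the same computation.
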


Our argument can also be used to obtain
an explicit bound on the error term in Theorem~\ref{thm:asympG}.

\section{Preliminaries}

\subsection{Endomorphism Rings}

Let $E$ be an elliptic curve over $\F_q$ of characteristic $p$. Let $N=\#E(\F_q)$ and $t=q+1-N$. Let $\pi$ denote the {\it Frobenius} endomorphism on $E$, that is given by
$$\pi: (x,y) \mapsto (x^q, y^q).$$
We note that, $\pi$ is the root of the characteristic polynomial $X^2-tX+q$ in the ring of $\F_q$-endomorphisms of $E$;
This ring is denoted by $\End_{\F_q}(E)$. Moreover, by
$\End(E)=\End_{\overline{\F}_q}(E)$ we denote full endomorphism ring, that is, the ring of $\overline{\F}_q$-endomorphisms of $E$. Let $\Delta=t^2-4q$ be the discriminant of the characteristic polynomial of $E$.

Suppose $\gcd(t,p)=1$. Then, $E$ is called an ordinary elliptic curve. We have $\End(E)=\End_{\F_q}(E)$. Moreover, $\End(E)$ is isomorphic to some order $O$ in the quadratic imaginary field $K=\Q(\sqrt{\Delta})$.
In particular, we have
$$\Z\[\pi\]=\Z\[\frac{\Delta+\sqrt{\Delta}}{2}\] \subseteq \End(E) \subseteq O_K, $$
where $O_K$ is the maximal order in $K$, that is, the ring of algebraic integers of $K$.

 Let $c_t=[O_K : \Z[\pi]]$ be the conductor of $\Z[\pi]$, that is the largest integer such that $\Delta/c_t^2 \equiv 0,1 \pmod 4$. Then,  $\Delta_K=\Delta/c_t^2$, called the fundamental discriminant, is the discriminant of the field $K$. Also, $O_K=\Z\[\frac{\Delta_K+\sqrt{\Delta_K}}{2}\]$. We note that, $O=\Z+cO_K$, where the conductor $c=[O_K : O]$ is a divisor of $c_t$. Furthermore, $\Delta=c^2\Delta_K$ is the discriminant of $O$, so the order $O$ is uniquely determined by its discriminant and denoted by $O(\Delta)$.
  We let $h(O)$ be the class number of $O$ which is also denoted by $h(\Delta)$.

Now, suppose $p \mid t$. Then, $E$ is called a supersingular elliptic curve. Let $\Q_{\infty, p}$ denote the unique quaternion algebra over $\Q$ which is only ramified at $p$ and~$\infty$. Then, $\End_{\F_q}(E)$ is either a quadratic order in $K=\Q(\sqrt{\Delta})$ or a maximal order in $\Q_{\infty,p}$. Moreover, $\End(E)$ is a maximal order in $\Q_{\infty,p}$; see~\cite{Sch, Water} or~\cite{Silv}.

\subsection{Isogeny calsses}
\label{sec:iso}
Two elliptic curves over $\F_q$ are called {\it isogenous} over $\F_q$ if and only if they have the same number of points over $\F_q$.
The number of $\F_q$-rational points of the elliptic curve $E$ over $\F_q$ satisfies the {\it Hasse-Weil} bound. On the other hand, {\it Deuring-Waterhouse} theorem, see~\cite{Water, Wash}, describes all possible values of $N$ that can be the cardinality of $E(\F_q)$, for some elliptic curve $E$ over $\F_q$.

\begin{lemma}
Let $q=p^k$ be a power of a prime $p$. Let $t\in \Z$ and let $N=q+1-t$. The integer $N$ is the cardinality of $E(\F_q)$, for some elliptic curve $E$ over $\F_q$, if and only if one of the following conditions is satisfied:
\begin{enumerate}
  \item $t^2\le 4q$ and $\gcd(t,p)=1$
  \item $k$ is odd and $t=0$
  \item $k$ is odd,  $t=\pm \sqrt{pq}$, $p=2$ or $3$
  \item $k$ is even, $t=0$, $p \not\equiv 1 \pmod 4$
  \item $k$ is even, $t=\pm \sqrt{q}$, $p \not\equiv 1 \pmod 3$
  \item $k$ is even, $t=\pm 2\sqrt{q}$.
\end{enumerate}
\end{lemma}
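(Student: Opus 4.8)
This is the classical Deuring--Waterhouse theorem, and the plan is to show that the listed conditions are both necessary and sufficient for $N=q+1-t$ to occur.

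\emph{Necessity.} The Hasse--Weil bound applied to $E$ gives $t^2\le 4q$ immediately, and $E$ is ordinary precisely when $\gcd(t,p)=1$, so it remains to treat the supersingular case $p\mid t$. Here the Newton polygon of Frobenius has both slopes equal to $1/2$: normalising $v_p$ by $v_p(q)=k$ we get $v_p(\pi)=v_p(\bar\pi)=k/2$, hence $v_p(t)=v_p(\pi+\bar\pi)\ge k/2$, so, $v_p(t)$ being an integer, $v_p(t)\ge\rf{k/2}$. Writing $t=\pm p^{a}m$ with $p\nmid m$ when $t\ne0$, the constraints $a\ge\rf{k/2}$ and $p^{2a}m^2=t^2\le 4p^k$ force, after a short case check, that $t=0$, or $k$ is even and $t\in\{\pm\sqrt q,\pm2\sqrt q\}$, or $k$ is odd and $t=\pm\sqrt{pq}$ with $p\in\{2,3\}$. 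To pull out the extra congruences in cases (4) and (5) I would use that $\End(E)$ is a maximal order in the quaternion algebra $\Q_{\infty,p}$ ramified only at $p$ and $\infty$: the imaginary quadratic field $\Q(\pi)=\Q(\sqrt{t^2-4q})$ must embed into $\Q_{\infty,p}$, which, as $\Q(\pi)$ is imaginary, holds if and only if $p$ does not split in $\Q(\pi)$. Evaluating $\Q(\pi)$ in each case --- it is $\Q(\sqrt{-3})$ when $t=\pm\sqrt q$, it is $\Q(\sqrt{-1})$ when $t=0$ and $k$ is even, and it is a field in which $p$ ramifies in the remaining supersingular cases --- yields exactly $p\not\equiv1\pmod3$, resp. $p\not\equiv1\pmod4$, and no restriction otherwise.

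\emph{Sufficiency.} For each admissible supersingular trace it suffices to exhibit one witness, and these are classical and explicit: base-changing to $\F_q$ a supersingular curve over $\F_p$ of trace $0$ gives $t=0$ for $k$ odd and $t=\pm2\sqrt q$ for $k$ even; quartic twists of a supersingular $j=1728$ curve over $\F_{p^2}$ (for $p\equiv3\pmod4$) and sextic twists of a supersingular $j=0$ curve over $\F_{p^2}$ (for $p\not\equiv1\pmod3$) realise $t=0$ and $t=\pm\sqrt q$ over $\F_q$; and base-changing an explicit supersingular curve over $\F_2$ or $\F_3$, such as $y^2+y=x^3+x$ over $\F_2$, realises $t=\pm\sqrt{pq}$ for all odd $k$ --- in every case a quadratic twist supplies the opposite sign. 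The substantial part is the ordinary case: one must realise \emph{every} $t$ with $\gcd(t,p)=1$ and $t^2\le 4q$. Since $p\mid 4q$ and $p\nmid t$, the discriminant $\Delta=t^2-4q\equiv t^2\pmod p$ is a non-zero square mod $p$, so $p$ splits in $K=\Q(\sqrt\Delta)$; I would take an elliptic curve with complex multiplication by the order $\Z[(t+\sqrt\Delta)/2]$ over a suitable number field, reduce it modulo a prime of $K$ above $p$ whose residue field is $\F_q$, and invoke Deuring's reduction theory to identify the Frobenius of the (necessarily ordinary) reduction with $(t+\sqrt\Delta)/2$, using a quadratic twist to fix the sign. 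Equivalently this is the one-dimensional case of Honda--Tate theory, the Weil $q$-number $(t+\sqrt\Delta)/2$ corresponding to an isogeny class of abelian varieties of dimension $\tfrac12[\Q(\pi):\Q]=1$.

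The necessity half and the supersingular witnesses are essentially elementary or standard; the genuine obstacle, and where I expect the real work to sit, is the ordinary sufficiency. It cannot be reached by point counting and requires Deuring's lifting and reduction results for CM elliptic curves (equivalently, Honda--Tate theory), together with careful bookkeeping of fields of definition so that the curve produced really lies over $\F_q$ and not over a proper subfield --- which is exactly the point at which the sharpness of the bound $t^2\le 4q$ enters.
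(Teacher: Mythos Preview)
The paper does not actually prove this lemma: its entire proof reads ``See~\cite{Water, Wash}.'' Your proposal, by contrast, is a genuine and essentially correct outline of the classical Deuring--Waterhouse argument --- the $p$-adic valuation analysis pinning down the possible supersingular traces, the embedding criterion for $\Q(\pi)\hookrightarrow\Q_{\infty,p}$ yielding the congruence obstructions in cases~(4) and~(5), and Deuring's CM lifting/reduction (equivalently Honda--Tate) for ordinary sufficiency are exactly the standard ingredients, and you correctly locate the nontrivial content in the last of these.

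One small imprecision worth flagging: when $k$ is even and $t=\pm 2\sqrt{q}$ you have $\pi=\pm p^{k/2}\in\Z$, so $\Q(\pi)=\Q$ is not an imaginary quadratic field ``in which $p$ ramifies'' as your blanket phrase suggests. This is harmless for the argument --- there is simply no embedding constraint to check, matching the absence of any congruence condition in case~(6) --- but the sentence should be adjusted. A second, more cosmetic point: in your Deuring-reduction sketch you speak of reducing at ``a prime of $K$ above $p$ whose residue field is $\F_q$'', but since $p$ splits in the imaginary quadratic $K$ the residue field there is $\F_p$; the descent to $\F_q$ (rather than to $\F_p$ or some intermediate field) is part of the ``bookkeeping of fields of definition'' you rightly flag as delicate, and is handled cleanly by the Honda--Tate formulation you mention.
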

\begin{proof}
See~\cite{Water, Wash}.
\end{proof}

Let $\Delta$ be a negative integer with $\Delta \equiv 0$ or $1 \pmod 4$ and let $c$ be the largest integer such that $c^2 \mid \Delta$ and $\Delta/c^2 \equiv 0$ or $1 \pmod 4$. Let $H(\Delta)$ denote the {\it Kronecker class number\/} of $\Delta$. We have
$$H(\Delta)=\sum_{l \mid c,\ l>0} h\(\frac{\Delta}{l^2}\).$$

Let $I(q;N)$ be  the number of distinct elliptic curves $E$ over $\F_q$  (up to isomorphism over $\F_q$)  such that $\# E(\F_p) = N$. The following lemma gives  explicit formulas for the values of $I(q;N)$.

\begin{lemma}
\label{lem:isoN}
Let $q=p^k$ be a power of a prime $p$. Let $t\in \Z$ and let $N=q+1-t$.
Then, $I(q;N)$, that is, the number of $\F_q$-isomorphism classes of elliptic curves $E$ over $\F_q$ with $\#E(\F_q)=N$, equals:
\begin{enumerate}
  \item $H(t^2-4q)$,\ if $t^2\le 4q$ and $\gcd(t,p)=1$
  \item $H(-4p)$,\ if $k$ is odd and $t=0$
  \item $1$,\  if $k$ is odd,  $t=\pm \sqrt{pq}$, $p=2$ or $3$
  \item $1-\chi_p(-4)$,\ if $k$ is even, $t=0$, $p \not\equiv 1 \pmod 4$
  \item $1-\chi_p(-3)$,\ if $k$ is even, $t=\pm \sqrt{q}$, $p \not\equiv 1 \pmod 3$
  \item $(p+6-4\chi_p(-3)-3\chi_p(-4))/12$,\ if  $k$ is even, $t=\pm 2\sqrt{q}$
  \item $0$,\ otherwise.
\end{enumerate}
\end{lemma}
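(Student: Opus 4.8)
The plan is to sort the $\F_q$-isomorphism classes of curves having a fixed number of points $N=q+1-t$ by their endomorphism ring and then to count each piece via the Deuring correspondence between CM elliptic curves and ideal classes. The nonexistence claim (case~7) is immediate: the Deuring--Waterhouse lemma above lists exactly the admissible pairs $(t,N)$, so $I(q;N)=0$ whenever $N$ is not of one of the forms in cases~1--6.

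For the ordinary case ($\gcd(t,p)=1$, $t^2<4q$), recall from the endomorphism-ring discussion that $\End_{\F_q}(E)=\End(E)$ is an order $O$ with $\Z[\pi]\subseteq O\subseteq O_K$, i.e.\ of conductor $c\mid c_t$, whose discriminant is $\Delta/(c_t/c)^2$. Writing $l=c_t/c$, as $c$ ranges over the divisors of $c_t$ so does $l$, and $O=O(\Delta/l^2)$. I would partition the $I(q;N)$ classes by this order and invoke Deuring's theorem, in the refined form due to Waterhouse and used by Schoof: for a fixed imaginary quadratic order $O$ into which $\pi$ embeds with norm $q$ and trace $t$, the $\F_q$-isomorphism classes of elliptic curves with $\End_{\F_q}(E)\cong O$ are in bijection with the class group of $O$, hence number $h(O)=h(\Delta/l^2)$. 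Summing over $l\mid c_t$ gives $\sum_{l\mid c_t}h(\Delta/l^2)=H(t^2-4q)$, by the definition of the Kronecker class number.

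The supersingular cases ($p\mid t$) use the same bookkeeping, except that $\End_{\F_q}(E)$ may now be an imaginary quadratic order or a maximal order of $\Q_{\infty,p}$. When $t=0$ and $k$ is odd, $\pi^2=-q$ and the $\F_p$-model of $E$ forces $\sqrt{-p}\in\End_{\F_q}(E)$, so the admissible orders lie between $\Z[\sqrt{-p}]$ and the maximal order of $\Q(\sqrt{-p})$, and the count is the Kronecker class number of $-4p$. When $t=\pm2\sqrt q$ (so $k$ is even and $\pi=\pm\sqrt q\in\Z$), $\End_{\F_q}(E)=\End(E)$ is a maximal order of $\Q_{\infty,p}$, the classes biject with the supersingular $j$-invariants in characteristic $p$, and the Eichler/Deuring mass formula gives their number as $(p+6-4\chi_p(-3)-3\chi_p(-4))/12$. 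When $t=\pm\sqrt q$ or $t=0$ with $k$ even, the unit $\pi/\sqrt q\in\End_{\F_q}(E)$ has order $3$ or $6$ (resp.\ $4$), so $E$ has a special $j$-invariant; counting how many such supersingular curves occur over $\F_q$ for each admissible $p$ yields $1-\chi_p(-3)$ (resp.\ $1-\chi_p(-4)$). Finally, the cases with $k$ odd, $t=\pm\sqrt{pq}$, $p\in\{2,3\}$, are checked directly from explicit Weierstrass models to contain exactly one $\F_q$-isomorphism class each.

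The crux is the ordinary-case appeal to Deuring's theorem: proving that the number of curves with a prescribed endomorphism order equals $h(O)$ requires the full lifting-and-reduction machinery (or Waterhouse's idelic reformulation), together with care over twists and conjugate Frobenii so that curves with trace $t$ are not merged with their trace-$(-t)$ twists. The supersingular subcases are elementary in spirit but scattered through the literature, and the characteristics $p=2,3$ need hands-on checking; in the write-up I would simply cite Schoof's and Waterhouse's determinations rather than reprove them.
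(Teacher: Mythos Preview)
Your proposal is correct and aligns with the paper's approach: the paper's entire proof is the single line ``See~\cite[Theorem~4.6]{Sch}'', so both you and the authors defer this result to Schoof. Your sketch of the underlying ideas (partition by endomorphism order, Deuring correspondence in the ordinary case, case-by-case analysis in the supersingular cases) is an accurate summary of what lies behind Schoof's theorem, and your closing remark that you would ultimately cite Schoof and Waterhouse matches exactly what the paper does.
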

\begin{proof}
See~\cite[Theorem 4.6]{Sch}.
\end{proof}

\subsection{Group structures}

The group of $\F_q$-rational points on the elliptic curve $E$ over $\F_q$ is isomorphic to the group $\Z_m\times \Z_{n}$, with unique integers $m, n$ such that $m \mid n$ and $m \mid q-1$. We note that, every group $\Z_m\times \Z_{n}$, with integers $m, n$ satisfy later conditions,
may not occur as the group $E(\F_q)$ for some elliptic curve $E$ over $\F_q$. The following theorem describes the possible group structures for elliptic curves over finite fields, see~\cite{TsVl}.

\begin{lemma}
\label{lem:GS}
Let $q=p^k$ be a power of a prime $p$. Let $m$, $n$ be positive integers with $m \le n$. Let $t=q+1-mn$. There is an elliptic curve $E$ over~$\F_q$ such that
$E(\F_q) \simeq \Z_m \times \Z_n$ if and only if one of the following holds:
\begin{enumerate}
  \item $\gcd(t,p)=1$, $t^2 \le 4q$, $m \mid n$ and $m \mid q-1$
  \item $k$ is odd, $t=0$, $p \not\equiv 3 \pmod 4$, and $m=1$
  \item $k$ is odd, $t=0$, $p \equiv 3 \pmod 4$, and $m=1$ or $2$
  \item $k$ is odd, $t=\pm \sqrt{pq}$, $p=2$ or $3$, and $m=1$
  \item $k$ is even, $t=0$, $p \not\equiv 1 \pmod 4$, and $m=1$
  \item $k$ is even, $t=\pm \sqrt{q}$, $p \not\equiv 1 \pmod 3$, and $m=1$
  \item $k$ is even, $t=\pm 2\sqrt{q}$, and $m=n=\sqrt{q}\mp 1$.
\end{enumerate}
\end{lemma}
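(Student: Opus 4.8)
The statement is an ``if and only if'', and only one direction has substance. Necessity of $m\mid n$ is the structure theorem for $E(\F_q)$; necessity of $m\mid q-1$ comes from the Weil pairing, since $E[m]\subseteq E(\F_q)$ forces $\mu_m\subseteq\F_q$; and necessity of $t=q+1-mn$ lying in the displayed list is precisely the Deuring--Waterhouse theorem recalled in Section~\ref{sec:iso}, applied to the cardinality $N=mn$. So the content of the lemma is the converse: for every admissible $N=mn$ with $m\mid n$ and $m\mid q-1$, some curve in the isogeny class of trace $t$ has group \emph{exactly} $\Z_m\times\Z_n$. I would prove this separately for ordinary and for supersingular classes.

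\emph{Ordinary case} ($\gcd(t,p)=1$, $t^2\le 4q$). The input is the endomorphism theory recalled above: the curves in this class realise precisely the orders $O$ with $\Z[\pi]\subseteq O\subseteq O_K$, i.e.\ those of conductor $f\mid c_t$, and each such $O$ does occur. For a curve with $\End_{\F_q}(E)=O$ one has $E[d]\subseteq E(\F_q)$ iff $\pi-1$ kills $E[d]$ iff $(\pi-1)/d\in O$; in fact $E(\F_q)\cong O/(\pi-1)O$ as $O$-modules (a theorem of Lenstra). A direct computation in the $\Z$-basis $\{1,f\omega_K\}$ of $O$, where $\omega_K=(\Delta_K+\sqrt{\Delta_K})/2$, using $\pi=(t+c_t\sqrt{\Delta_K})/2$ and $t-2=q-1-N$, shows that $(\pi-1)/d\in O$ holds iff $d\mid q-1$ and $fd\mid c_t$. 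Since $m\mid n$ and $m\mid q-1$ give $m^2\mid N$, and the set of $d$ with $d^2\mid N$ and $d\mid q-1$ is closed under least common multiples (hence has maximum $s_t$, which every member divides), we get $m\mid s_t\mid c_t$; taking the curve whose endomorphism ring has conductor exactly $c_t/m$, the criterion yields $(\pi-1)/m\in O$ but $(\pi-1)/(m\ell)\notin O$ for every prime $\ell\mid n/m$, so $E(\F_q)\cong\Z_m\times\Z_n$. Conversely any realised $m$ divides $s_t$, so these are all the structures --- this is case~1.

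\emph{Supersingular cases} ($p\mid t$), namely cases 2--7. The admissible traces are given by the Deuring--Waterhouse theorem, the curves and their counts by Lemma~\ref{lem:isoN}, and $E(\F_q)=\ker(\pi-1)$ is rigid enough to read off from the action of $\pi$ and Waterhouse's explicit description. For $t=\pm 2\sqrt q$ (so $k$ is even and $\sqrt q\in\Z$), the characteristic polynomial $X^2\mp 2\sqrt q\,X+q=(X\mp\sqrt q)^2$ together with the absence of nonzero nilpotents in $\End_{\F_q}(E)$ forces $\pi=\pm\sqrt q$, which acts as a scalar, so $E(\F_q)=\ker(\pi-1)\cong(\Z_{\sqrt q\mp1})^2$: this is case~7. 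For $t=0$ one has $N=q+1$ and, by Waterhouse's classification, $\End_{\F_q}(E)$ is an order in $\Q(\sqrt{-q})$ (the quaternionic curves, which need $k$ even and $p\equiv1\pmod4$, are absent from the list); reading off $\ker(\pi-1)$ shows $E(\F_q)$ is cyclic unless $k$ is odd and $p\equiv3\pmod4$, where $4\mid q+1$, $2\mid q-1$, $(\pi-1)/2=(\sqrt{-q}-1)/2$ is an algebraic integer, and exactly $\Z_{q+1}$ and $\Z_2\times\Z_{(q+1)/2}$ occur (neither $\Z_4\times\Z_{(q+1)/4}$ nor $\Z_\ell\times\Z_{(q+1)/\ell}$ with $\ell$ odd does, as the corresponding $(\pi-1)/d$ is not an algebraic integer): this is cases 2, 3, 5. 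The traces $t=\pm\sqrt q$ ($k$ even, $p\not\equiv1\pmod3$) and $t=\pm\sqrt{pq}$ ($k$ odd, $p\in\{2,3\}$) go the same way and give cyclic point groups, so $m=1$: cases 4 and 6.

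The main obstacle is the ordinary case: the one genuinely delicate step is verifying the criterion ``$(\pi-1)/d\in O$ iff $d\mid q-1$ and $fd\mid c_t$'', whose subtlety is entirely $2$-adic (the parities of $\Delta_K$, $c_t$, $d$); everything else is cited structure theory or bookkeeping. On the supersingular side the only non-formal point is the $2$-part of $E(\F_q)$ for $t=0$, $k$ odd, $p\equiv3\pmod4$. (Equivalently, the lemma just asserts $G(q;m,n)>0$; as the value of $G$ in each of cases 1--7 of Theorem~\ref{thm:Gmn} is positive --- all class numbers being at least $1$, and $m\in\cS_t(m)$ --- it also follows from that theorem.)
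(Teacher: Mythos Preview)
The paper does not prove this lemma at all: it is quoted from Tsfasman--Vladut~\cite{TsVl} with only the sentence ``see~\cite{TsVl}'' by way of justification. Your write-up is therefore not a comparison target but an independent proof, and the ordinary part is essentially the standard argument (Waterhouse's realisation of every order between $\Z[\pi]$ and $O_K$, together with the Lenstra-type criterion $E[d]\subseteq E(\F_q)\iff(\pi-1)/d\in\End_{\F_q}(E)$), carried out correctly. One small point: once your criterion gives $(\pi-1)/d\in O\iff d\mid m$ for the curve with conductor $c_t/m$, you do not need to restrict the test primes $\ell$ to divisors of $n/m$; any $\ell>1$ already fails $m\ell\mid m$.

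Two caveats. First, your closing parenthetical --- that the lemma follows from Theorem~\ref{thm:Gmn} because the displayed values of $G(q;m,n)$ are positive --- is circular in this paper: the proof of Theorem~\ref{thm:Gmn} opens by invoking Lemma~\ref{lem:GS} to delimit the nonzero cases. Second, in the supersingular discussion your remark that ``the quaternionic curves, which need $k$ even and $p\equiv 1\pmod 4$, are absent'' is garbled: for $t=0$ one has $\pi^2=-q$, so $\Q(\pi)=\Q(\sqrt{-q})$ is imaginary quadratic and $\End_{\F_q}(E)$ is an order there regardless of $p\bmod 4$; the genuinely quaternionic $\End_{\F_q}(E)$ occurs only when $\pi\in\Z$, i.e.\ $t=\pm 2\sqrt q$, which you handle separately. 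The substance of your $t=0$ analysis (reading off the $2$-part via whether $(\pi-1)/2=(\sqrt{-q}-1)/2$ is integral) is fine; just drop the misplaced parenthetical.
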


We note that, the Case~1 in Lemma~\ref{lem:GS} corresponds to ordinary elliptic curves and the other cases corresponds to suppersingular elliptic curves.

As usual, we let $E[l]$ be the set of $l$-torsion points of the elliptic curve $E$ over $\F_q$, that is,
$$
E[l]=\set{P~:~P\in E(\overline{\F}_q),\ lP=\cO}.
$$
We note that, if $\gcd(l,q)=1$, then
$$E[l] \cong \Z/l\Z \times \Z/l\Z.$$

\begin{lemma}
\label{lem:EndG}
Let $E$ be an ordinary elliptic curve over $\F_q$. The following are equivalent:
\begin{enumerate}
  \item $m=\max \set { l~:~ l \in \N, \ \gcd(l,q)=1,\ E[l] \subseteq E(\F_q)}$
  \item $m=\max \set{ l~:~ l \in \N, \ l \mid q-1,\ l^2 \mid \#E(\F_q),\ O\(\frac{\Delta}{l^2}\) \subseteq \End(E) }$
  \item $E(\F_q) \simeq \Z_m \times \Z_n$, where $m \mid n$ and $m \mid q-1$.
\end{enumerate}
\end{lemma}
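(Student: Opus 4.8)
The plan is to deduce the easy equivalence $(1)\Leftrightarrow(3)$ from the group structure of $E(\F_q)$, and to connect $(1)$ with $(2)$ through the action of the Frobenius $\pi$ on torsion points. For $(1)\Leftrightarrow(3)$: write $E(\F_q)\simeq\Z_{m'}\times\Z_{n'}$ with $m'\mid n'$ and $m'\mid q-1$, which is always possible. For every $l$ with $\gcd(l,q)=1$ we have $E[l]\cong\Z/l\Z\times\Z/l\Z$, and such a group embeds into $\Z_{m'}\times\Z_{n'}$ exactly when $l\mid m'$ (using $m'\mid n'$); since $m'\mid q-1$ gives $\gcd(m',q)=1$, the integer $m'$ itself lies in the set in $(1)$, so the maximum there is $m'$. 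This says precisely that the $m$ of $(1)$ equals the $m$ of $(3)$.

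For $(1)\Leftrightarrow(2)$ I would first prove the key sublemma: for $l$ with $\gcd(l,q)=1$,
$$E[l]\subseteq E(\F_q)\qquad\Longleftrightarrow\qquad\frac{\pi-1}{l}\in\End(E).$$
If $\pi-1=l\phi$ with $\phi\in\End(E)$, then $(\pi-1)P=\phi(lP)=\cO$ for every $P\in E[l]$, which gives the implication $\Leftarrow$. For $\Rightarrow$, the hypothesis means $E[l]\subseteq\ker(\pi-1)$; since $p\nmid l$, the map $[l]$ is separable with kernel $E[l]$, so the (nonzero) isogeny $\pi-1$ factors as $\phi\circ[l]$ for a unique $\phi\in\End(E)$, i.e. $\phi=\frac{\pi-1}{l}\in\End(E)$.

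Next I would translate the right-hand side into the divisibility conditions of $(2)$. Suppose $\frac{\pi-1}{l}\in\End(E)$. Since $\End(E)\subseteq O_K$, it is an algebraic integer; as the trace and norm of $\pi-1$ are $t-2$ and $q+1-t=\#E(\F_q)$ respectively, this forces $l\mid t-2$ and $l^2\mid\#E(\F_q)$, and reducing $\#E(\F_q)=q+1-t$ modulo $l$ then gives $l\mid q-1$. Conversely, a short congruence argument shows that $l\mid q-1$ together with $l^2\mid\#E(\F_q)$ already imply $l\mid t-2$, that $\Delta/l^2\equiv 0$ or $1\pmod 4$, and that $\frac{\pi-1}{l}$ is an algebraic integer whose characteristic polynomial $X^2-\frac{t-2}{l}X+\frac{\#E(\F_q)}{l^2}$ has discriminant $\Delta/l^2$; consequently $\Z[\frac{\pi-1}{l}]$ is exactly the order $O(\Delta/l^2)$. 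As $\End(E)$ is a $\Z$-subalgebra of $K=\Q(\sqrt\Delta)$, the membership $\frac{\pi-1}{l}\in\End(E)$ is therefore equivalent to $O(\Delta/l^2)\subseteq\End(E)$. Combining this with the sublemma shows that $l$ is admissible in $(1)$ if and only if it is admissible in $(2)$; hence the two maxima coincide and $(1)\Leftrightarrow(2)$.

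The step I expect to be the main obstacle is the sublemma together with the identification $\Z[\frac{\pi-1}{l}]=O(\Delta/l^2)$, and the attendant bookkeeping needed to verify that the conditions ``$l\mid q-1$ and $l^2\mid\#E(\F_q)$'' occurring in $(2)$ actually make $O(\Delta/l^2)$ well defined (that is, that $\Delta/l^2$ is a genuine discriminant). Everything else reduces to the group-theoretic count of the first paragraph and routine manipulations with the order structure of $\End(E)$ recalled in the Preliminaries.
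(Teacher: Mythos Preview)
Your argument is correct. The equivalence $(1)\Leftrightarrow(3)$ is handled exactly as in the paper: both use that $E[l]\cong\Z/l\Z\times\Z/l\Z$ for $\gcd(l,q)=1$ and read off the maximal such $l$ from the decomposition $E(\F_q)\simeq\Z_{m'}\times\Z_{n'}$.

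The difference lies in $(1)\Leftrightarrow(2)$. The paper simply quotes \cite[Proposition~3.7]{Sch}, which asserts directly that for $\gcd(l,q)=1$ one has $E[l]\subseteq E(\F_q)$ if and only if $l\mid q-1$, $l^2\mid\#E(\F_q)$ and $O(\Delta/l^2)\subseteq\End(E)$; the equivalence of the two maxima is then immediate. You instead reprove this proposition from first principles: the sublemma $E[l]\subseteq E(\F_q)\Leftrightarrow(\pi-1)/l\in\End(E)$ via separable factorisation through $[l]$, followed by the identification $\Z[(\pi-1)/l]=O(\Delta/l^2)$ through a discriminant computation. This is exactly the content behind Schoof's proposition, so the route is the same in spirit but your version is self-contained. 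What you gain is independence from the external reference; what the paper gains is brevity. Your bookkeeping (that $l\mid q-1$ and $l^2\mid\#E(\F_q)$ force $l\mid t-2$ and hence that $\Delta/l^2$ is a genuine discriminant) is correct and fills in precisely the details the citation hides.
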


\begin{proof}
We recall,~\cite[Prposition~3.7]{Sch}, that for all positive integers $l$ with $\gcd(l,q)=1$, we have $E[l] \subseteq E(\F_q)$ if and only if $\ l \mid q-1$, $l^2 \mid \#E(\F_q)$ and $O\(\frac{\Delta}{l^2}\) \subseteq \End(E)$. Therefore, the descriptions of $m$ in Cases~1 and~2 are the same.

Moreover, for all positive integers $l$ with $\gcd(l,q)=1$, we have $E[l] \simeq \Z_l \times \Z_l$. Suppose $E(\F_q) \simeq \Z_m \times \Z_n$, where $m \mid n$ and $m \mid q-1$. Then, for all $l$ with $\gcd(l,q)=1$, we have $E[l] \subseteq E(\F_q)$ if and only if $l \mid m$. Hence,
Cases~1 and~3 are also equivalent.
\end{proof}

We recall the definition  of  the numbers $c_t$ and $s_t$ and of the
sets $\cS_t(m)$ given in Section~\ref{sec:intro}.

\begin{lemma}
\label{lem:EndE}
Let $E$ be an ordinary elliptic curve over $\F_q$. Assume that $m, n$ are positive integers with $m\mid n$, $m \mid q-1$ and $mn=\#E(\F_q)=q+1-t$. Then, we have $E(\F_q) \simeq \Z_m \times \Z_n$
if and only if
$$
\End(E)= O\(\frac{\Delta}{l^2}\)
$$
for some $l\in \cS_t(m)$.
\end{lemma}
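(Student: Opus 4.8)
The plan is to combine Lemma~\ref{lem:EndG} with the well-known correspondence between endomorphism rings and orders containing $\Z[\pi]$. Fix an ordinary elliptic curve $E/\F_q$ with $\#E(\F_q)=q+1-t=mn$, where $m\mid n$ and $m\mid q-1$. By Lemma~\ref{lem:EndG}, the group structure $E(\F_q)\simeq\Z_m\times\Z_n$ holds if and only if
$$
m=\max\set{l\in\N~:~ l\mid q-1,\ l^2\mid \#E(\F_q),\ O\(\tfrac{\Delta}{l^2}\)\subseteq\End(E)}.
$$
Since $\End(E)=O(\Delta/c^2)$ for the conductor $c=[O_K:\End(E)]$, and since $O(\Delta/l^2)\subseteq O(\Delta/c^2)$ precisely when $c\mid l$ (containment of orders corresponds to divisibility of conductors), the condition $O(\Delta/l^2)\subseteq\End(E)$ is equivalent to $c\mid l$. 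Also $l\mid q-1$ and $l^2\mid q+1-t$ together say exactly $l\mid s_t$, by the definition of $s_t$ as the largest such integer (note $l^2\mid q+1-t$ and $l\mid q-1$ iff $l\mid s_t$, since $s_t$ is defined to be maximal with $s_t^2\mid q+1-t$ and $s_t\mid q-1$, and these divisibility conditions are closed under taking divisors). Hence the set over which we maximize is $\set{l : c\mid l,\ l\mid s_t}$, and its maximum is $m$ if and only if $m$ is the largest divisor of $s_t$ divisible by $c$.

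So I would reduce the statement to the following purely arithmetic claim: given $c$ (the conductor of $\End(E)$, a divisor of $c_t$, with $c\mid \Delta/l^2$-compatibility automatic) and given that $m\mid s_t$, we have
$$
m=\max\set{l : c\mid l,\ l\mid s_t}
\iff
c\in\cS_t(m).
$$
Recall $\cS_t(m)=\cM_t(m)\setminus\bigcup_{l>m,\ m\mid l,\ l\mid s_t}\cM_t(l)$, where $\cM_t(m)=\set{e\in\N: m\mid e,\ e\mid c_t}$. First, membership in $\cM_t(m)$ is the condition $m\mid c\mid c_t$; the inclusion $c\mid c_t$ always holds since the conductor of any order between $\Z[\pi]$ and $O_K$ divides $c_t=[O_K:\Z[\pi]]$, and $m\mid c$ follows from $O(\Delta/m^2)\subseteq O(\Delta/c^2)=\End(E)$, which in turn is forced by $E[m]\subseteq E(\F_q)$ via Lemma~\ref{lem:EndG}. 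Second, $c$ lying \emph{outside} every $\cM_t(l)$ with $l>m$, $m\mid l$, $l\mid s_t$ means: there is no divisor $l$ of $s_t$, strictly larger than $m$ and divisible by $m$, with $l\mid c$; but since $c\mid l$ is what we need for $l$ to be in the maximizing set... wait — here I must be careful about the direction. The set being maximized is $\set{l: c\mid l,\ l\mid s_t}$, so $m$ is its maximum iff $c\mid m$ and no $l$ with $c\mid l$, $l\mid s_t$ exceeds $m$. Combined with $m\mid c$ (shown above) this gives $c\mid m$ and $m\mid c$, i.e. $c=m$... which is too strong.

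Let me re-examine: the correct reading is that $\cS_t(m)$ collects those conductors $c$ for which the associated group is $\Z_m\times\Z_n$, and the relation $c\mid m$ need not hold. The resolution is that $O(\Delta/l^2)\subseteq\End(E)=O(\Delta/c^2)$ is equivalent to $c\mid l$ (smaller order sits inside larger order; conductor of the smaller is a multiple of the conductor of the larger). So $O(\Delta/m^2)\subseteq\End(E)$ means $c\mid m$, and then the maximizing set $\set{l: c\mid l,\ l\mid s_t}$ has maximum $m$ iff $c\mid m$, $m\mid s_t$, and no proper multiple of $m$ dividing $s_t$ is itself a multiple of $c$ — but every multiple of $m$ is a multiple of $c$ when $c\mid m$, so the condition becomes: no divisor of $s_t$ properly exceeds $m$ among multiples of $m$. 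Now I match this with $\cS_t(m)$: the condition $c\in\cM_t(m)$ wants $m\mid c$, the opposite divisibility. Thus the intended dictionary must be that $\End(E)=O(\Delta/l^2)$ with $l\in\cS_t(m)$ where $l$ ranges over conductors and the defining property of $\cS_t(m)$ is engineered so that $l\in\cS_t(m)$ iff $m=\max\set{e: e\mid s_t,\ e\mid l}$; I would verify this equivalence directly from the definitions of $\cM_t$ and $\cS_t$ — namely $l\in\cM_t(m)$ forces $m\mid l$ so $m\le\gcd(l,s_t)$ when $m\mid s_t$, and excluding all $\cM_t(l')$ with $l'>m$, $m\mid l'$, $l'\mid s_t$ forces $\gcd(l,s_t)=m$, which is precisely $m=\max\set{e:e\mid s_t,\ e\mid l}$. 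This matches the maximizing-set description once I note $O(\Delta/e^2)\subseteq\End(E)=O(\Delta/l^2)$ iff $l\mid e$, so the maximizing set is $\set{e: l\mid e,\ e\mid s_t}$ with maximum $\gcd$... I will track the conductor-divisibility direction carefully and then the equivalence is immediate. The main obstacle is exactly this bookkeeping: getting the direction of conductor divisibility right (smaller order $\leftrightarrow$ larger conductor) and then recognizing that the somewhat opaque definition of $\cS_t(m)$ is precisely the condition ``$\gcd(l, s_t)=m$'' on the conductor $l$ — after that, Lemma~\ref{lem:EndG} closes the argument in both directions.
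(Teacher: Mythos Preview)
Your overall strategy is the same as the paper's: write $\End(E)=O(\Delta/l^2)$ for some $l\mid c_t$, use Lemma~\ref{lem:EndG} to reduce the group-structure statement to ``$m$ is the largest divisor of $s_t$ with $O(\Delta/m^2)\subseteq O(\Delta/l^2)$'', and then identify this with $l\in\cS_t(m)$. You also correctly unpack the definition of $\cS_t(m)$ to the clean criterion $\gcd(l,s_t)=m$, which the paper leaves implicit.

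The gap is precisely the bookkeeping you flag but do not resolve. First, the line ``$\End(E)=O(\Delta/c^2)$ for the conductor $c=[O_K:\End(E)]$'' is wrong: the order of conductor $c$ has discriminant $c^2\Delta_K=\Delta\,(c/c_t)^2$, so $\End(E)=O(\Delta/l^2)$ with $l=c_t/c$, not $l=c$. Second, and more seriously, your final direction for the containment is reversed. The order $O(\Delta/e^2)$ has conductor $c_t/e$, and $\Z+fO_K\subseteq\Z+gO_K$ iff $g\mid f$; hence
\[
O\!\left(\tfrac{\Delta}{e^2}\right)\subseteq O\!\left(\tfrac{\Delta}{l^2}\right)
\quad\Longleftrightarrow\quad
\frac{c_t}{l}\ \Big|\ \frac{c_t}{e}
\quad\Longleftrightarrow\quad
e\mid l,
\]
not $l\mid e$ as you write. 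With the correct direction, the set in Lemma~\ref{lem:EndG} becomes $\{e:e\mid s_t,\ e\mid l\}$, whose maximum is $\gcd(s_t,l)$, and your (correct) identification $l\in\cS_t(m)\iff\gcd(l,s_t)=m$ then closes the argument immediately. As written, however, your set $\{e:l\mid e,\ e\mid s_t\}$ has maximum $s_t$ (when $l\mid s_t$) or is empty, certainly not $\gcd(l,s_t)$, so the matching step fails. Fix the two points above and the proof is complete and coincides with the paper's.
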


\begin{proof}
We note that
$$
\End(E)= O\(\frac{\Delta}{l^2}\),
$$
where $l$ is some positive divisor of $c_t$. By assumption, $m$ is a divisor of $s_t$.
>From Lemma~\ref{lem:EndG}, we have $E(\F_q) \simeq \Z_m \times \Z_n$ if and only if $m$ is the largest divisor of $s_t$  satisfying $O\(\frac{\Delta}{m^2}\) \subseteq \End(E)= O\(\frac{\Delta}{l^2}\)$. The latter is equivalent to have $l \in \cS_t(m)$ which completes the proof.
\end{proof}

\subsection{Primes in arithmetic progressions}

For a real $z \ge 2$ and integers $s> r \ge 0$
we denote by $\pi(z;s,r)$  the number of primes $p \le z$
such that  $p \equiv r \pmod s$.

An asymptotic estimate of the number of primes in arithmetic progressions is given by the {\sl Siegel--Walfisz theorem}, see~\cite[Theorem~1.4.6]{CrPom}.

\begin{lemma}
\label{lem:SigWalf}
For every fixed  $A>0$ there exists $C>0$ such that for $z\ge 2$ and for all positive integers $s\le(\log z)^A$,
$$
\max_{\gcd(r,s)=1}\left|\pi(z;s,r)-\frac{\mathrm {li}\, z}{\varphi(s)}\right| = O\( z \exp\(-C\sqrt{\log z}\)\),
$$
where
$$
\mathrm{li}\,z = \int_{2}^z \frac{d\,u}{\log u}.
$$
\end{lemma}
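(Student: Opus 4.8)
The plan is to establish this classical estimate through the analytic theory of Dirichlet $L$-functions. First I would pass from $\pi(z;s,r)$ to the weighted counting function
$$
\psi(z;s,r)=\sum_{\substack{n\le z\\ n\equiv r\ (\mathrm{mod}\ s)}}\Lambda(n),
$$
where $\Lambda$ is the von Mangoldt function. A routine partial summation, together with the negligible $O(\sqrt{z}\log z)$ contribution of the proper prime powers, converts an estimate of the shape $\psi(z;s,r)=z/\varphi(s)+O\left(z\exp(-C\sqrt{\log z})\right)$ into the asserted bound for $\pi(z;s,r)-\mathrm{li}\,z/\varphi(s)$. It therefore suffices to treat $\psi(z;s,r)$, uniformly for $\gcd(r,s)=1$ and $s\le(\log z)^A$.

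Next, orthogonality of the $\varphi(s)$ Dirichlet characters modulo $s$ gives
$$
\psi(z;s,r)=\frac{1}{\varphi(s)}\sum_{\chi\ (\mathrm{mod}\ s)}\overline{\chi}(r)\,\psi(z,\chi),\qquad \psi(z,\chi)=\sum_{n\le z}\chi(n)\Lambda(n).
$$
The principal character $\chi_0$ contributes $\psi(z,\chi_0)=\psi(z)+O\left((\log z)^2\right)$, and the prime number theorem in the sharp form $\psi(z)=z+O\left(z\exp(-c\sqrt{\log z})\right)$ produces the main term $z/\varphi(s)$. Everything then reduces to showing that $\psi(z,\chi)\ll z\exp(-c'\sqrt{\log z})$ for each non-principal $\chi$, since there are at most $s\le(\log z)^A$ of them and division by $\varphi(s)$ preserves such a bound.

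For a non-principal character $\chi$ I would invoke the truncated explicit formula
$$
\psi(z,\chi)=-\sum_{|\gamma|\le T}\frac{z^\rho}{\rho}+O\!\left(\frac{z(\log sz)^2}{T}\right),
$$
where $\rho=\beta+i\gamma$ ranges over the nontrivial zeros of $L(s,\chi)$. The de la Vall\'ee Poussin zero-free region supplies a constant $c>0$ with no zero in the range $\beta>1-c/\log\left(s(|\gamma|+2)\right)$, the single possible exception being one real zero $\beta_1$ attached to a real character. Inserting this region and the standard density estimate $N(T,\chi)\ll T\log(sT)$ for the zeros into the explicit formula, and optimising $T$, bounds the contribution of every zero other than a possible exceptional one by $O\left(z\exp(-c'\sqrt{\log z})\right)$, provided $s\le(\log z)^A$.

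The hard part will be the exceptional (Siegel) zero, whose contribution is of size $z^{\beta_1}/\beta_1\asymp z\exp\left(-(1-\beta_1)\log z\right)$. This can only be controlled by a lower bound on $1-\beta_1$ of Siegel type: for every $\varepsilon>0$ there is a constant $C(\varepsilon)>0$ with $\beta_1<1-C(\varepsilon)\,s^{-\varepsilon}$, obtained by Siegel's device of playing two real characters against one another. That argument is famously ineffective, which is precisely why the constant $C$ cannot be made explicit in the statement. Taking $\varepsilon=1/(2A)$ and using $s\le(\log z)^A$ gives $s^{-\varepsilon}\ge(\log z)^{-1/2}$, whence $z^{\beta_1}\le z\exp\left(-C(\varepsilon)(\log z)^{1/2}\right)$; this is exactly the saving required, and it simultaneously forces the range $s\le(\log z)^A$ and the appearance of $\sqrt{\log z}$. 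Combining this with the bound of the previous paragraph and summing over the characters completes the proof.
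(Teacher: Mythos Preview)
Your outline is a correct sketch of the standard proof of the Siegel--Walfisz theorem: reduction from $\pi$ to $\psi$ by partial summation, orthogonality of characters, the truncated explicit formula for $\psi(z,\chi)$, the classical zero-free region, and Siegel's ineffective lower bound $1-\beta_1 \gg_\varepsilon s^{-\varepsilon}$ to handle the possible exceptional zero. The choice $\varepsilon = 1/(2A)$ is exactly what links the range $s\le(\log z)^A$ to the saving $\exp(-C\sqrt{\log z})$, and you have identified that correctly.

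The paper, however, does not prove this lemma at all: it simply quotes it as~\cite[Theorem~1.4.6]{CrPom} and uses it as a black box in the proof of Theorem~\ref{thm:asympG}. So your approach is not so much different as strictly more detailed---you are reproducing the classical argument that the paper defers to the literature. For the purposes of this paper the citation is entirely adequate, since Siegel--Walfisz is a standard tool and nothing in the argument depends on its internal mechanism; your sketch would be appropriate in a self-contained exposition but is unnecessary here.
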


\section{Proofs}

\subsection{Proof of Theorem~\ref{thm:Gmn}}

We note that $G(q;m,n)\ne 0$ if and only if $m,n$ satisfy one of the cases given by Lemma~\ref{lem:GS}. So, we study the nonzero number $G(q;m,n)$ for the possible values of $m,n$ as follows.

For Case~1, we assume that $\gcd(t,p)=1$ and $t^2\le 4q$. From Lemma~\ref{lem:GS}, we see that $G(q;m,n)\ne 0$ if and only if $m \mid n$ and $m \mid q-1$. So, let $m, n$ be positive integers satisfying the latter conditions. From Lemma~\ref{lem:EndE}, for all elliptic curve $E$ over $\F_q$, we have $E(\F_q) \simeq \Z_m\times \Z_{n}$ if and only if $\End(E)=O(\frac{\Delta}{l^2})$ for some $l\in \cS_t(m)$. We also note that, all orders $O(\frac{\Delta}{l^2})$ whit $l\in \cS_t(m)$, will occur as the endomorphism ring of some elliptic curves over $\F_q$, see~\cite[Theorem 4.2]{Water}. Moreover, the number of $\F_q$ isomorphism classes of elliptic curves with $\End(E)=O(\frac{\Delta}{l^2})$ is $h\(\frac{\Delta}{l^2}\)$ (e.g.~see~\cite[Theorem~4.5]{Sch,Water}). Therefore, we have
$$G(q;m,n)=\sum_{l\in \cS_{t}(m)} h\(\frac{\Delta}{l^2}\).
$$

For Case~2, we have $t=0$. Moreover, $G(q;m,n)$ with $m=1$ is the number of cyclic supersingular elliptic curves over $\F_q$ with trace $0$ (up to $\F_q$-isomorphism), that is, $h(-4p)$, see~\cite[Lemma 4.8]{Sch}.

For Case~3, we have $t=0$ and $q \equiv 3 \pmod 4$.
Also, $G(q;m,n)$ with $m=2$ is the number of non-cyclic supersingular elliptic curves over $\F_q$ with trace $0$ (up to $\F_q$-isomorphism). This is $H(-4p)-h(-4p)=h(-p)$.

For other cases, we have $t^2=q,2q,3q,4q$. Also, all supersingular elliptic curves in the corresponding isogeny class are cyclic. Then, $G(q;m,n)$ with $m=1$ is the isogeny class number given by Lemma~\ref{lem:isoN}. So, the proof of Theorem~\ref{thm:Gmn} is complete.

\subsection{Proof of Theorem~\ref{thm:G}}

The possible group structures of elliptic curves over $\F_q$ are the groups isomorphic to $\Z_m \times \Z_n$, for some values $m,n$ described by Lemma~\ref{lem:GS}. For an integer $t$, let $f(q;t)$ be the the number of distinct group structures of elliptic curves over $\F_q$ with the trace $t$. Let $t$ be a positive integer with $\abs{t} \le 2\sqrt{q}$. From Lemma~\ref{lem:GS}, we consider the following cases for~$t$.
\begin{enumerate}
\item Suppose $\gcd(t,p)=1$. Let $N=q+1-t$. Then, the group $\Z_m \times \Z_n$, for positive integers $m,n$ with $m \le n$, is the group structure of some elliptic curve $E$  over $\F_q$  with trace $t$ if and only if $m \mid n$, $m \mid q-1$ and $mn=N$. This is equivalent to have $m^2 \mid N$, $m \mid q-1$ and $mn=N$. As before, let $s_t$ be the largest integer such that $s_t^2 \mid N$ and $s_t \mid q-1$. Therefore, there is a one to one correspondence between the group structures of ordinary elliptic curves over $\F_q$ with the trace $t$ and positive integer divisors of $s_t$. So,
    \begin{equation}\label{eq:g(q;t)}
    f(q; t)=d(s_t)
    \end{equation}
if $\gcd(t,p)=1$.

\item Suppose $t\mid p$. Then, we may have $t^2/q=0,1,2,3$ or $4$. From Lemma~\ref{lem:GS}, we see that
\begin{equation}\label{eq:g(q;t)0}
f(q;t)=
\left\{
\begin{array}{ll}
\displaystyle{
1+\frac{1-\chi_p(-1)}{2},}&  \text{ if } k \text{ is odd,}\ t=0, \\
1, &  \text{ if } k \text{ is odd, } p = 2 \text{ or } 3,\ t^2=pq.\\
\displaystyle{\frac{1-\chi_p(-1)}{2},} &  \text{ if } k \text{ is even, } p \ne 2,\ t=0, \\
1, &  \text{ if } k \text{ is even, } p = 2,\ t=0,\\
\displaystyle{\frac{1-\chi_p(-3)}{2},} &  \text{ if } k \text{ is even, } p \ne 3,\ t^2=q, \\
1, &  \text{ if } k \text{ is even, } p =3,\ t^2=q, \\
1, &  \text{ if } k \text{ is even, } t^2=4q, \\
0, &  \text{ otherwise.}
\end{array}
\right.
\end{equation}
\end{enumerate}
Now, we sum up $g(q; t)$ over all possible integer values of $t$. We have $$F(q)=\sum_{t\in \Z,\ t^2\le 4q} f(q;t).$$
Using~\eqref{eq:g(q;t)} and~\eqref{eq:g(q;t)0}, we obtain the explicit formulas for $F(q)$.

\subsection{Proof of Theorem~\ref{thm:LUG}}

Let $\cH_q$ be the set of integers of the Hasse-Weil interval, that is,
$$\cH_q=\set{N: N \in \N,\;  q-2\sqrt{q}+1 \le N \le q+2\sqrt{q}+1 }.$$
We recall, from the proof of Theorem~\ref{thm:G}, that for every $N\in \cH_q$ with $\gcd(N-1,p)=1$, there is a bijection between the set of group structures of isogenous elliptic curves $E$ over $\F_q$ with order $N$ and the set of positive divisors $m$ of $q-1$ with $m^2 \mid N$.

For a positive integer divisor $m$ of $q-1$, let $g(q;m)$ be the number of distinct group structures $\Z_m \times \Z_n$ of elliptic curves over $\F_q$ for some $n\in \N$. In other words, $g(q;m)$ is the cardinality of the set of positive integers $n$ where there exists some elliptic curve $E$ over $\F_q$ with $E(\F_q) \simeq \Z_m \times \Z_n$.
Clearly, we have
\begin{equation}
\label{eq:Gg}
F(q)=\sum_{m\mid q-1} g(q;m).
\end{equation}

Here, we express $g(q;m)$ by counting the number of multiples of $m^2$ in $\cH_q$.
For a positive integer divisor $m$ of $q-1$, let
$$\cH_q(m)=\set{N: N \in \cH_q,\ \gcd(N-1,p)=1,\ m^2\mid N }.$$
>From the proof of Theorem~\ref{thm:G} and by Lemma~\ref{lem:GS}, for all positive divisors $m$ of $q-1$, we have
\begin{equation}\label{eq:g(q,m)}
g(q;m)=\#\cH_q(m) + \delta_q(m),
\end{equation}
where
$$
\delta_q(m)=
\left\{
\begin{array}{ll}
1, &  \text{ if } k \text{ is odd, } p \ne 2, 3,\ m=1, \\
\displaystyle{1+\frac{1-\chi_p(-1)}{2}-\chi_p(-3),} &  \text{ if } k \text{ is even, } p \ne 2, 3,\ m=1, \\
3, &  \text{ if } p = 2 \text{ or } 3,\ m=1, \\
\displaystyle{\frac{1-\chi_p(-1)}{2}, }&
 \text{ if } k \text{ is odd,}\ m=2 \\
1, &  \text{ if } k \text{ is even, } m=\sqrt{q} \pm 1, \\
0, & \text{ otherwise. }
\end{array}
\right.
$$
Next, using~\eqref{eq:Gg} and~\eqref{eq:g(q,m)}, we obtain
\begin{equation}
\label{eq:GHdel}
F(q)=\sum_{m \mid q-1} \#\cH_q(m)+ \delta_q(m).
\end{equation}
We note that $\#\cH_q=2\[2\sqrt{q}\]+1$. Moreover, for all divisors $m$ of $q-1$, if $m \ge \sqrt{q}+1$ then $\#\cH_q(m)=0$ and if $m < \sqrt{q}+1$, then
$$\[\frac{4\sqrt{q}}{m^2}\]-\[\frac{4\sqrt{q}}{m^2p}\]-1 \le \#\cH_q(m) \le \[\frac{4\sqrt{q}}{m^2}\]-\[\frac{4\sqrt{q}}{m^2p}\]+1$$
and so,
\begin{equation}
\label{eq:Hq(m)}
\frac{4\sqrt{q}}{m^2}\(1-\frac{1}{p}\)-2 < \#\cH_q(m) < \frac{4\sqrt{q}}{m^2}\(1-\frac{1}{p}\)+2.
\end{equation}

To obtain an upper bound for $F(q)$, we write
\begin{eqnarray*}
\lefteqn{
\sum_{m \mid q-1} \#\cH_q(m)}\\
& &\qquad = \sum_{\substack{m \mid q-1,\\m < \sqrt{q-1}}} \#\cH_q(m)+  \sum_{\substack{m \mid q-1,\\ \sqrt{q-1} \le m < \sqrt{q}+1}} \#\cH_q(m)+  \sum_{\substack{m \mid q-1,\\m \ge \sqrt{q}+1}} \#\cH_q(m) \\
& &\qquad  < \sum_{m \mid q-1} \frac{4\sqrt{q}}{m^2}\(1-\frac{1}{p}\)+ \sum_{\substack{m \mid q-1,\\m < \sqrt{q-1}}} 2+ \sum_{\substack{m \mid q-1,\\ \sqrt{q-1} \le m < \sqrt{q}+1}} \#\cH_q(m).
\end{eqnarray*}

One can see that,
$$
\sum_{\substack{m \mid q-1,\\ \sqrt{q-1} \le m < \sqrt{q}+1}} \#\cH_q(m)+ \sum_{m \mid q-1} \delta_q(m) \le 5.
$$
Then, from~\eqref{eq:GHdel}, we have
$$
F(q) < 4\sqrt{q} \(1-\frac{1}{p}\) \sum_{m \in \N} \frac{1}{m^2} + d(q-1)+5 = \frac{2\pi^2}{3}\sqrt{q}\(1-\frac{1}{p}\)+d(q-1)+5.
$$

Now, we provide the lower bound for $F(q)$.
If $p=2$, using~\eqref{eq:GHdel}, we write
$$F(q) \ge  \#\cH_q(1)+ \sum_{m \mid q-1} \delta_q(m) \ge \[2\sqrt{q}\] + 3 > 2\sqrt{q}+2 .$$
For $p \ge 3$, using~\eqref{eq:GHdel}, we write
$$F(q) \ge  \#\cH_q(1)+\#\cH_q(2)+ \sum_{m \mid q-1} \delta_q(m).$$
We see from~\eqref{eq:Hq(m)} that
$$
\#\cH_q(m) > \frac{4\sqrt{q}}{m^2} \(1-\frac{1}{p}\)-2.
$$
Moreover, one can see that
$$\#\cH_q(2) > \sqrt{q} \(1-\frac{1}{p}\)-1
$$
if $q \equiv 1 \pmod 4$.
From~\eqref{eq:g(q,m)}, we have
$$\sum_{m \mid q-1} \delta_q(m) \ge 1.
$$
Furthermore,
$$\sum_{m \mid q-1} \delta_q(m) \ge 2
$$
if $q \equiv 3 \pmod 4$.
Therefore, $$\#\cH_q(2)+\sum_{m \mid q-1} \delta_q(m) > \sqrt{q} \(1-\frac{1}{p}\),$$
which completes the proof.

\subsection{Proof of Theorem~\ref{thm:limG}}

To prove the result of Case~1,
let us
choose a sufficiently large integer $L$, and
let $M$ be the least common multiple of all positive
integers $m \le L$.

We now choose a prime $p \equiv 1 \pmod M$
and put $q = p$.
Using~\eqref{eq:Gg} and~\eqref{eq:g(q,m)} we derive
$$
F(q) \ge \sum_{\substack{m\mid q-1\\ m \le L}} g(q;m)
= \sum_{m \le L} g(q;m)
=  \sum_{m \le L} \(\#\cH_q(m) + O(1)\)
$$
Since by~\eqref{eq:Hq(m)} for $q=p$ we have
$$
\#\cH_q(m) = \frac{4 \sqrt{q}}{m^2} + O(1),
$$
we now derive
\begin{eqnarray*}
F(q) &\ge  &
  \sum_{m \le L} \( \frac{4 \sqrt{q}}{m^2} + O(1)\)
= 4 \sqrt{q}\sum_{m \le L} \frac{1}{m^2} + O(L)\\
& =  & 4 \sqrt{q}\(\frac{\pi^2}{6} + O(1/L)\)+ O(L).
\end{eqnarray*}
Since by the prime number theorem we have
$$
q \ge M \ge \exp\((1 + o(1))L\),
$$
taking $L \to \infty$ we obtain
$$
F(q) = \(\frac{2\pi^2}{3}+o(1)\)\sqrt{q}=
\(\frac{2\pi^2}{3}+o(1)\)\sqrt{q}\(1 - \frac{1}{p}\)
$$
for the above sequence of $q=p$.

For Case~2, we recall a result of Heath-Brown~\cite{HB}, which
asserts that there are infinitely many primes $p$ such that
either $p=2 \ell +1$ for a prime $\ell$
or  $p=2 \ell_1 \ell_2+ 1$  for a primes $\ell_1, \ell_2
\ge p^{\alpha}$ for some
$\alpha > 1/4$ (one can take $\alpha = 0.276\ldots$, see the proof
of~\cite[Lemma~1]{HB}). Using~\eqref{eq:Gg} and~\eqref{eq:g(q,m)}
we see that for each such prime $p$ and $q=p$ we have
\begin{eqnarray*}
F(q) &= & \sum_{m\mid q-1} g(q;m)
= \sum_{m\mid q-1} \(\#\cH_q(m) + O(1)\)\\
 &= &\#\cH_q(1)+ \#\cH_q(2) + O(1) = 5\sqrt{q} + O(1) \\
 &= &5\sqrt{q}\(1 - \frac{1}{p}\) + O(1).
\end{eqnarray*}

Finally in Case~3, we recall that if $q = 2^r$, where $r$ is prime then
all prime divisors $\ell$ of $q-1$ satisfy
$\ell \equiv 1 \pmod r$ (since $r$ is the multiplicative
order of $2$ modulo $\ell$, thus $r \mid \ell-1$).
In particular for any $m \mid q-1$ with $m > 1$ we have $m > r$.
Hence, as before,  and also recalling~\eqref{eq:Hq(m)},
for $q = 2^r$ we obtain
\begin{eqnarray*}
F(q) &= & \
 \#\cH_q(1) +
\sum_{\substack{m\mid q-1 \\ m > 1}} \(\#\cH_q(m) + O(1)\)\\
 &= &\#\cH_q(1)+  O\(\sum_{\substack{m\mid q-1 \\ m > 1}}
 \(q^{1/2}m^{-2} + 1\)\) \\
&= &\#\cH_q(1)+  O\(q^{1/2}\sum_{ m > r} m^{-2} + d(q-1)\) \\
&= & \#\cH_q(1)+  O\(q^{1/2} (\log q)^{-1} + d(q-1)\) =
\(2+o(1)\) q^{1/2}
\end{eqnarray*}
which concludes the proof.

\subsection{Proof of Theorem~\ref{thm:asympG}}

Since there are $O(Q^{1/2})$ prime
powers $q =p^k \le Q$ with $k\ge 2$, using the upper
bound of Theorem~\ref{thm:LUG} we obtain
\begin{equation}
\label{eq:q and p}
\sum_{q \le Q} F(q)=  \sum_{p \le Q} F(p) + O(Q).
\end{equation}

We see from~\eqref{eq:GHdel}  that
$$
F(p)=4\sqrt{p}  \sum_{m \mid p-1}\frac{1}{m^2} +
O(d(p-1)).
$$
We recall  the well-known estimate on the divisor function
\begin{equation}
\label{eq:div}
d(s) = s^{o(1)}, \qquad  s\to \infty,
\end{equation}
see~\cite[Theorem~317]{HardyWright}.
Thus
\begin{eqnarray*}
\sum_{p \le Q} F(p)& = &
4\sum_{p \le Q} \sqrt{p}  \sum_{m \mid p-1}\frac{1}{m^2}
+O\(Q^{1 + o(1)}\)\\
& = & 4\sum_{m \le Q}\frac{1}{m^2}
\sum_{\substack{p \le Q \\ p\equiv 1 \pmod m}} \sqrt{p}
+O\(Q^{1 + o(1)}\).
\end{eqnarray*}

By Lemma~\ref{lem:SigWalf}, and partial summation, we see that for
$m \le \log Q$ we have
$$
\sum_{\substack{p \le Q \\ p\equiv 1 \pmod m}} \sqrt{p}
= \(\frac{2}{3}+o(1)\) Q^{1/2} \frac{\mathrm {li}\, Q}{\varphi(m)}
= \(\frac{2}{3}+o(1)\)\frac{Q^{3/2}}{\varphi(m) \log Q}.
$$
Furthermore, for $ m > \log Q$ we use the trivial estimate
$$
\sum_{\substack{p \le Q \\ p\equiv 1 \pmod m}} \sqrt{p}
\le Q^{1/2} \sum_{\substack{2 \le n \le Q \\ n\equiv 1 \pmod m}} 1
= O( Q^{3/2}m^{-1}).
$$
Therefore
\begin{eqnarray*}
\sum_{p \le Q} F(p)&=& \(\frac{8}{3}+o(1)\)\frac{Q^{3/2}}{\log Q}
\sum_{m \le \log Q} \frac{1}{m^2 \varphi(m)}
+ O\(Q^{3/2}\sum_{m> \log Q} m^{-3}\)\\
&=&\(\frac{8}{3}+o(1)\)\frac{Q^{3/2}}{\log Q}
\sum_{m =1}^\infty \frac{1}{m^2 \varphi(m)}
+ O\(Q^{3/2} (\log Q)^{-2}\),
\end{eqnarray*}
which together with~\eqref{eq:q and p}
concludes the proof.

\section{Distribution of the Most Frequent Group Structures}
\label{sec:max freq}

\subsection{Preliminaries}

Here we present some numerical data concerning
the values of $G(q)$ given by~\eqref{eq:Hq}
and also about the values of $m$ and $n$ at which these
values are achieved. Furthermore, we concentrate here on
prime values $q=p$.

First of all we note that for any $N$ we have
$$
\sum_{\substack{m,n \ge 1\\mn = N}} G(p;m,n)
= I(p;N),
$$
where as before $I(p;N)$ is  the number of distinct isomorphism
classes of elliptic curves $E$ over $\F_p$  (up to isomorphism over
$\F_p$)  such that $\# E(\F_p) = N$ (see Lemma~\ref{lem:isoN}). In
particular
\begin{equation}
\label{eq:H and I}
\max_N I(p;N)/d(N) \le G(p) \le \max_N I(p;N).
\end{equation}

It is well-known that the bounds on the Kronecker class number
imply that
$$
I(p;N) \ll p^{1/2} \log p ( \log \log p)^2
$$
and for all $N \in [p +1 -p^{1/2}, p +1  + p^{1/2}]$
but maybe at most two of them, we have
$$
I(p;N) \gg p^{1/2}/ \log p ,
$$
see, for example,~\cite[Proposition~1.9]{Len}. Thus,
recalling~\eqref{eq:div}, we derive  from the
inequalities~\eqref{eq:H and I} that
\begin{equation}
\label{eq:Gp:rough}
G(p) = p^{1/2 +o(1)}.
\end{equation}

\subsection{Numerical data}

We see that from~\eqref{eq:Gp:rough} that it is natural to study the
 values of $G(p)$ scaled by $p^{1/2}$.
In fact, our experiments
with $41538$ primes $p< \pmax$ show that scaling by $p^{1/2}\log p$
is more natural and  the ratio $G(p)/p^{1/2}\log p$ stabilises
in a reasonably narrow strip between roughly $0.1$ and $0.2$,
see Figure~\ref{fig:maxG}.

\begin{figure}[htb]
\begin{center}
\includegraphics*[width=\textwidth]{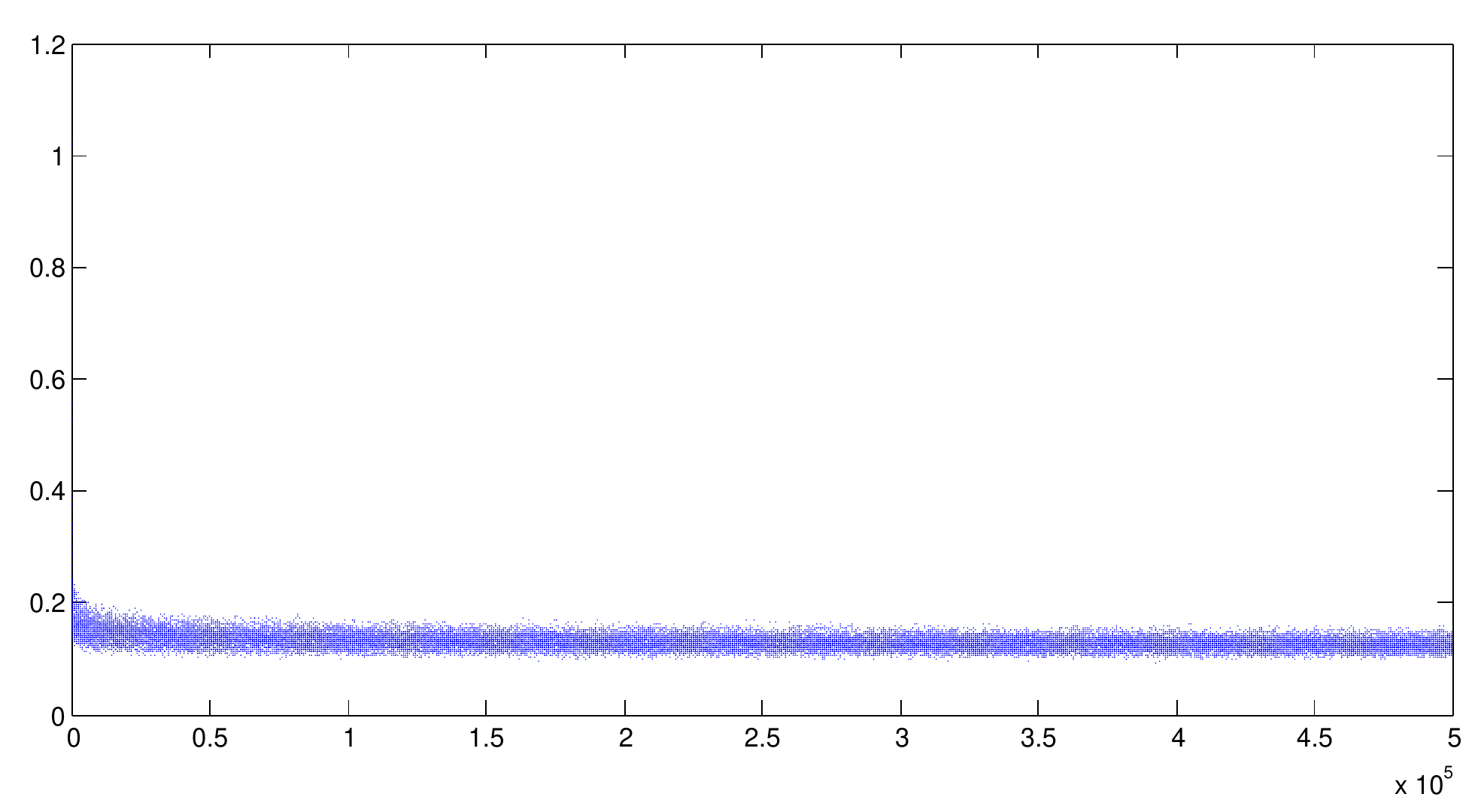}
\end{center}
\vspace*{-10mm} \caption{Distribution of $G(p)/p^{1/2}\log p$ for
primes $p<\pmax$.} \label{fig:maxG}
\end{figure}

We also notice that for all primes checked the value of $G(p)$ is
always achieved for $(m,n)$ with $m = 1$ (that is, for curves with
cyclic group of points). Moreover, for some primes the same value is
also achieved for some pairs $(m,n)$ with $m=2$. In our experiments
the value of $G(p)$ has never been achieved with $m \ge 3$.

We also compare $G(p)$ with
$$
I(p) = \max_{N} I(p;N),
$$
see Figure~\ref{fig:GI}.

\begin{figure}[htb]
\begin{center}
\includegraphics*[width=\textwidth]{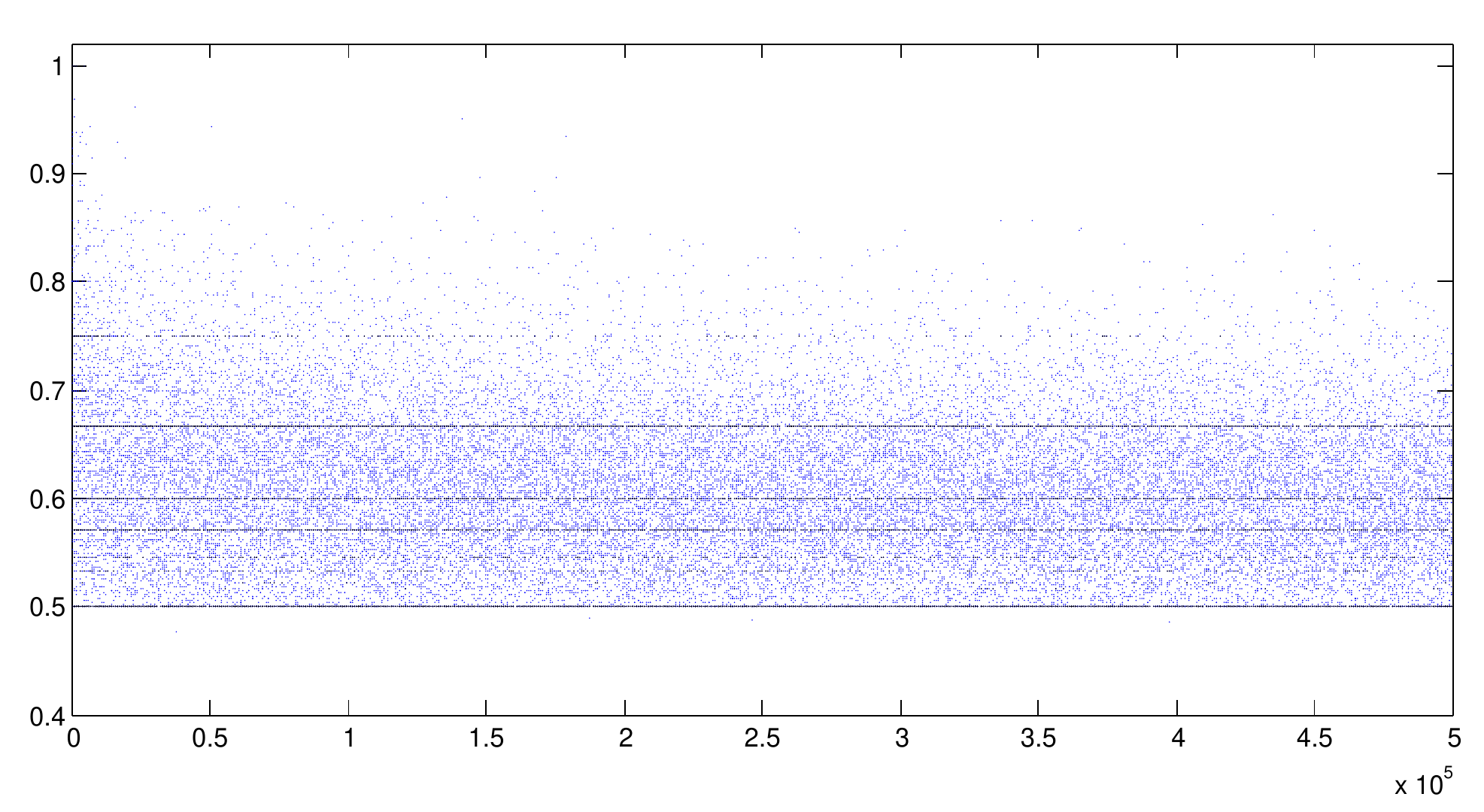}
\end{center}
\vspace*{-10mm} \caption{Distribution of $G(p)/I(p)$ for
primes $p<\pmax$.} \label{fig:GI}
\end{figure}

For primes $p < \pmax$, where we have computed $G(p)/I(p)$ this
ration has achieved $1$ for  $p 2, 5, 7, 17, 29, 41, 101, 1009,
1109, 1879, 4289$, where $G(p)$ and $I(p)$ are achieved with the same
value of $t$ with $s_t=1$. Also, only four times (for $p =37591,
187651, 246391, 397591$) the value of $G(p)/I(p)$ has been
below $0.5$. Unfortunately these extreme values of both types are
invisible on Figure~\ref{fig:GI}. We do not know whether these
primes are just some sporadic exceptions or whether there are
infinitely many such primes. More generally, it is certainly
interesting to evaluate or at least obtain nontrivial theoretic
estimates on
$$
\limsup_{p\to \infty}G(p)/I(p) \mand
\liminf_{p\to \infty}G(p)/I(p).
$$
This may also help to explain the presence of several horizontal
lines on  Figure~\ref{fig:GI} (slightly emphasised there to improve
their visibility).

Clearly, one expects that the value of $G(p)$ is achieved for
$(m,n)$ for which $t= p+1 -N$, where $N = mn$, is small, so that
$\Delta=t^2 - 4p$ has a large absolute value which leads to a large
value of $I(p;N)$.
 However this is offset by the fact that for $N$
having many divisors so the  value of $I(p;N)$ is ``split'' between
$d(s_t)$  values of $G(p;m,n)$. This effect is observed in the
numerical results which are presented below, which show that if
$G(p;m,n) = G(p)$ then $t= p+1 -mn$ is small but not necessary very
small. In particular, most of the time $G(p)$ and $I(p)$ are
achieved on different values of $t$, namely in about $82.2\%$ of the
cases within the above range of primes $p$
(more precisely for  $34158$ primes out of the total
number of $41538$ primes $p< \pmax$). Furthermore, it seems
that the remaining $7380$ cases
in which $G(p)$ and $I(p)$ are achieved on the same
value of $t$, are the ones that mainly (but not entirely)
responsible for the presence of horizontal lines on
Figure~\ref{fig:GI}. Indeed, the same lines are clearly visible on
Figure~\ref{fig:GI-t} where the ratios $G(p)/I(p)$  are plotted only
if they come from the same value of $t$.

\begin{figure}[htb]
\begin{center}
\includegraphics*[width=\textwidth]{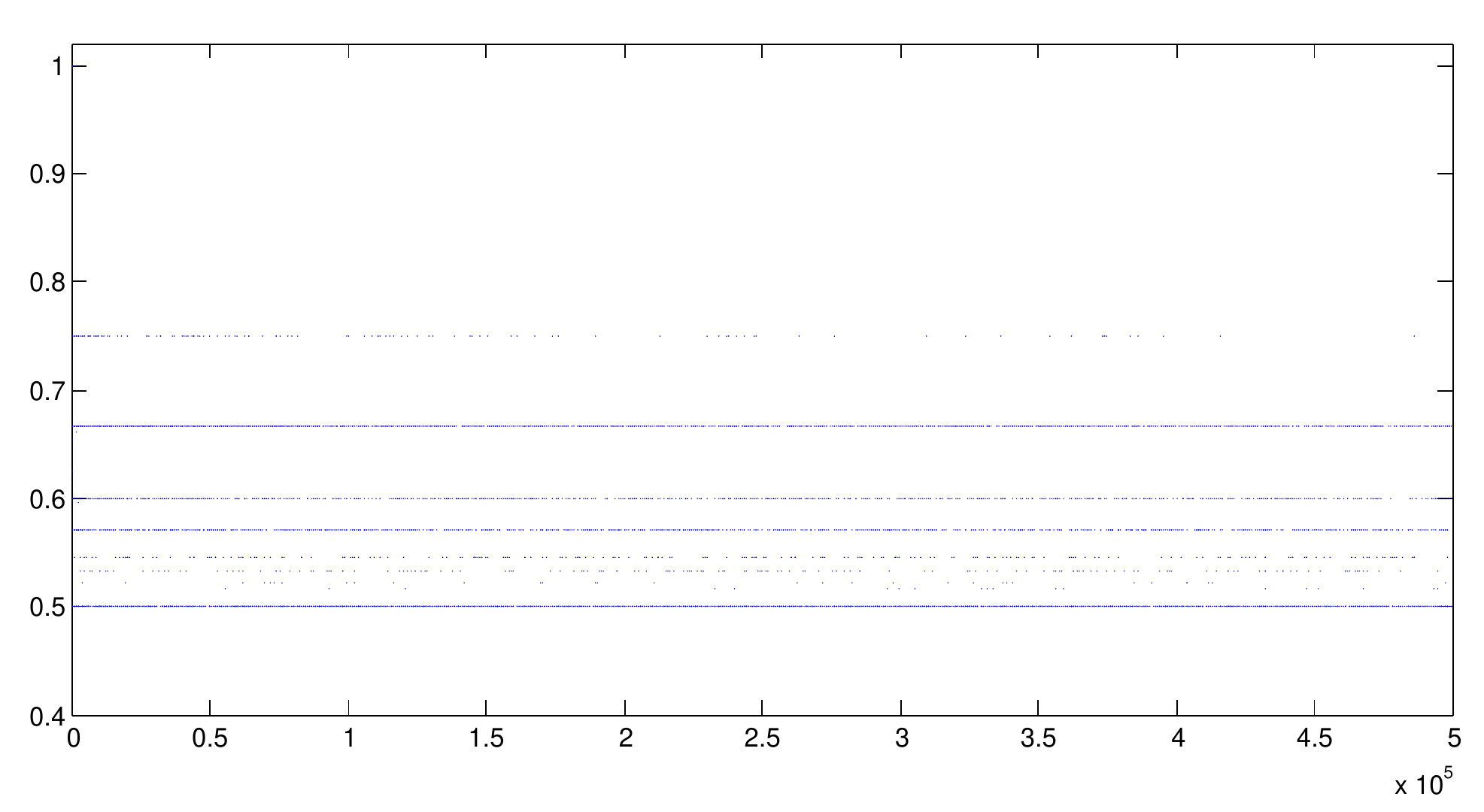}
\end{center}
\vspace*{-10mm} \caption{Distribution of $G(p)/I(p)$ achieved on the
same value of $t$ for primes $p<\pmax$.} \label{fig:GI-t}
\end{figure}

We also summarise this in Table~\ref{tab:GI-t} which gives the
number of points on horizontal lines on Figure~\ref{fig:GI} and
Figure~\ref{fig:GI-t} (ordered by the the total number of points).

\begin{table*}[!ht]
\begin{center}
\begin{tabular}{|l|l|l|}
\hline
Ration $G(p)/I(p)$    &  Number of primes & Number of primes \\
&  on Figure~\ref{fig:GI} &  on Figure~\ref{fig:GI-t}\\
\hline
 1/2   & 2933  &    2931\\
 2/3   & 2300 &   1968 \\
  3/5    & 1287 &     883\\
 4/7   & 1236 &    1012 \\
 6/11  & 329 &     220 \\
  5/8   & 292 &  1 \\
 8/15  & 268 &     161 \\
 3/4   & 258 &     139 \\
 12/23 & 71 &     32  \\
  17/26 & 45 &   1 \\
 16/31   & 39 &    19\\
  45/68  & 28 & 1 \\
  28/47  & 15 & 1 \\
 1    &  11 & 11 \\
\hline
\end{tabular}
\end{center}
\vspace*{-5mm} \caption{Rations of $G(p)/I(p)$ for primes $p<\pmax$.}\label{tab:GI-t}
\end{table*}

Let 
$$\cT_{\max}(p)=\set{t~:~t=p+1-mn,\ G(p;m,n)=G(p)},
$$
that is, for each $p$, $\cT_{\max}(p)$ is the set of the traces
corresponding to the most ``popular'' group structures. 
In Table~\ref{tab:Tmax} we give some data about the distribution of 
$\# \cT_{\max}(p)$ for primes $p < 500,000$. 
In particular $\# \cT_{\max}(p)=1$ in about $52\%$
of the cases.

\begin{table*}[h!]
\begin{center}
\begin{tabular}{|l|l|}
\hline
$\# \cT_{\max}(p)$   &  Number of primes\\
\hline
1  & 21638\\
2  & 19087\\
3  & 230\\
4  & 524\\
5 & 19\\
6  & 36\\
7  & 3\\
10  & 1\\\hline
\end{tabular}
\end{center}
\vspace*{-5mm} \caption{Distribution of 
$\# \cT_{\max}(p)$  for primes $p<\pmax$.}\label{tab:Tmax}
\end{table*}

We also remark that the set $\cT_{\max}(p)$  is 
symmetric around $0$ (that is, $\cT_{\max}(p)=-\cT_{\max}(p)$)
for 20020 primes out of the total number of 
$41538$ primes $p< \pmax$.

Figure~\ref{fig:Tmax} presents the scaled values $t/p^{1/2}$, where
$t\in T_{\max}$, for primes $p < \pmax$.

\begin{figure}[h!]
\begin{center}
\includegraphics*[width=\textwidth]{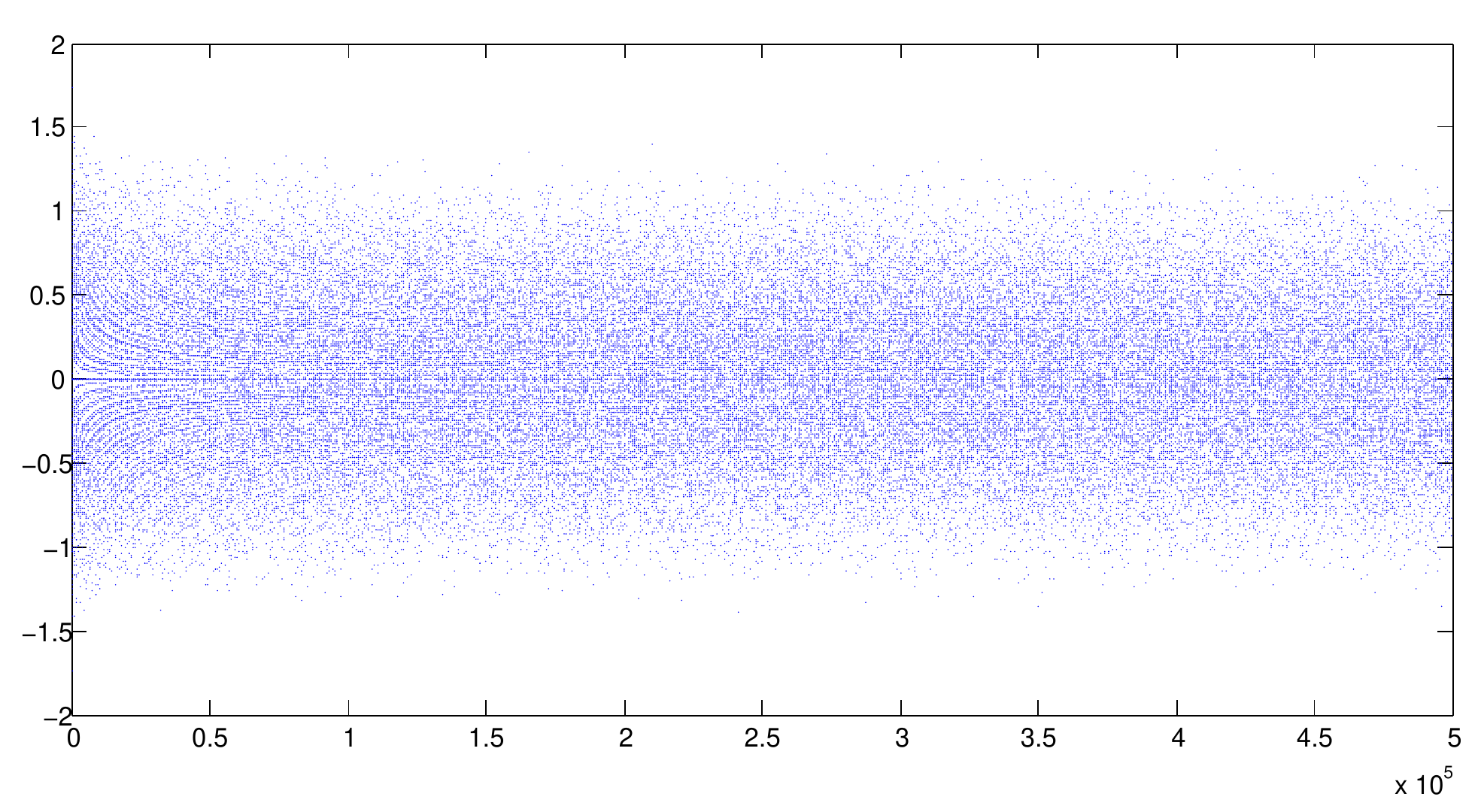}
\end{center}
\vspace*{-10mm} \caption{Distribution of $\frac{t}{\sqrt{p}}$, where
$t\in T_{\max}$, for primes $p<\pmax.$} \label{fig:Tmax}
\end{figure}

As we have mentioned, we do not have any solid theoretic explanation
to the observed facts. There is certainly more to investigate here,
numerically and theoretically,  in order to understand the behaviour
of  $G(p;m,n)$.

\end {document}